\providecommand*{\Dist}[2]{\operatorname{dist}({#1};{#2})}   
\providecommand*{\Dist}[2]{\Dist{#1}{#2}}
\newcommand{\Bn}{{\boldsymbol{n}}}
\newcommand{\Bp}{{\boldsymbol{p}}}
\newcommand{\Bq}{{\boldsymbol{q}}}
\newcommand{\Br}{{\boldsymbol{r}}}
\newcommand{\BL}{{\boldsymbol{L}}}
\newcommand{\BV}{{\boldsymbol{V}}}
\newcommand{\BW}{{\boldsymbol{W}}}
\newcommand{\lambdabf}{\boldsymbol{\lambda}}
\newcommand{\mubf}{\boldsymbol{\mu}}
\newcommand{\rmI}{{\rm I}}
\newcommand{\Cl}{\mathcal{L}}
\newcommand{\be}{\begin{eqnarray}}
\newcommand{\ee}{\end{eqnarray}}
\newcommand{\ben}{\begin{eqnarray*}}
\newcommand{\een}{\end{eqnarray*}}
\newtheorem{example}{Example}[section]
\newtheorem{rem}{Remark}[section]
\newtheorem{lem}{Lemma}[section]
\newtheorem{thm}{Theorem}[section]
\begin{document}

\title{A Uniform Preconditioner for a Newton Algorithm for Total-Variation Minimization and Minimum-surface Problems}

\author{Xue-Cheng Tai\thanks{Department of Mathematics, Hong Kong Baptist University, Kowloon Tong Kowloon, Hong Kong. (xuechengtai@gmail)}.
\and
Ragnar Winther\thanks{Department of Mathematics, University of Oslo}.
\and
Xiaodi Zhang
	\thanks{Corresponding Author. Henan Academy of Big Data, Zhengzhou University, Zhengzhou 450052, China. School of Mathematics and Statistics, Zhengzhou University, Zhengzhou 450001, China.  (zhangxiaodi@lsec.cc.ac.cn)} 
		\and
Weiying Zheng\thanks{LSEC, Academy of Mathematics and Systems Science, Chinese Academy of Sciences, Beijing, 100190, China. School of Mathematical Science, University of Chinese Academy of Sciences,Sciences, Beijing 100049, China. The forth author was supported in part by the National Science Fund for Distinguished Young Scholars 11725106 and by China NSF grant 11831016. (zwy@lsec.cc.ac.cn)}}
\maketitle

\begin{abstract} Solution methods for the nonlinear partial differential equation of the Rudin-Osher-Fatemi (ROF) and minimum-surface models are fundamental for many modern applications. Many efficient algorithms have been proposed. First order methods are common. They are popular due to their simplicity and easy implementation. Some second order Newton-type iterative methods have been proposed like Chan-Golub-Mulet method.
In this paper, we propose a new Newton\textendash Krylov solver for primal-dual
finite element discretization of the ROF model. The method is so simple that we just need to use some diagonal preconditioners during the iterations.
Theoretically,
the proposed preconditioners are further proved to be robust
and optimal with respect to the mesh size, the penalization parameter,
the regularization parameter, and the iterative step, essentially it is a parameter independent preconditioner.
We first discretize
the primal-dual system by using mixed finite element methods, and
then linearize the discrete system by Newton\textquoteright s method.
Exploiting the well-posedness of the linearized problem on appropriate
Sobolev spaces equipped with proper norms, we propose block diagonal
preconditioners for the corresponding system solved with the minimum residual
method.  Numerical results
are presented to support the theoretical results.
\end{abstract}

\begin{keywords}
	ROF model; primal-dual; finite element method;  Newton method; block preconditioners
\end{keywords}
\begin{AMS}
    65M60, 65M12.
\end{AMS}

\section{Introduction\label{sec:intro}}

Image restoration is a fundamental and challenging task in image processing. A surge of research has been done in variational and PDE-based
approaches. The ROF model due to Rudin, Osher and Fatemi \cite{Rudin1992}
is one of the most successfully and widely used mathematical models.
Given an image $f:\Omega\subset\mathbb{R}^{d}\mapsto\mathbb{R},\,d=1,2,3$,
the ROF model is trying to solve the following minimization problem:
\begin{equation}
\min_{v}\left\{ E(v)=\int_{\Omega}\alpha|\nabla v|{\rm d}x+\frac{1}{2}\int_{\Omega}(v-f)^{2}{\rm d}x\right\} ,\label{eq:rof}
\end{equation}
where $f$ is the observed image, $\alpha>0$
is the penalization parameter which controls the trade-off between
goodness-of-fit and visibility in its minimizer $u$. The
Euler-Lagrange equation for the minimization problem \eqref{eq:rof}
can be formally written as
\begin{equation}
-\alpha\nabla\cdot\left(\frac{\nabla u}{|\nabla u|}\right)+u-f=0	\quad \mbox{  in  } \Omega,
	\qquad \nabla u \cdot \Bn = 0 \quad \mbox{  on  } \partial \Omega.\label{eq:rofmodelu}
\end{equation}
Here and after, $\Bn$ denotes the unit outer normal vector of $\partial \Omega$. This equation indeed characterizes the first-order optimality condition
of \eqref{eq:rof}, which is also known as the curvature equation
\cite{Osher2003}. As a fundamental well studied model in the literature, the ROF model is important for many modern  applications include scientific computing, image processing and data sciences.

To deal with the singularity caused by the
TV-norm minimization in \eqref{eq:rof}, the following regularized minimization problem is often studied,
\begin{equation}
	\min_{v}\left\{ E_{\beta}(v)=\alpha\int_{\Omega}
	\sqrt{|\nabla v|^{2}+\beta}{\rm d}x+\frac{1}{2}\int_{\Omega}(v-f)^{2}{\rm d}x\right\} ,\label{eq:rofp}
\end{equation}
where the regularization parameter $\beta>0$ is typically small. Since the regularized energy functional $E_{\beta}(v)$ is strictly convex for $\beta >0$,  the minimizer to \eqref{eq:rofp} is unique. In \cite{Acar1994}, it has been shown the solution of the regularized problem \eqref{eq:rofp}
converges to the solution of \eqref{eq:rof} as $\beta\rightarrow0$.
The convergence has been established rigorously in \cite{Yao2008}.
It should be noted that this regularization technique is mostly used to compute the minimizer of the total variation energy and its
variants \cite{Casas1999,Chan2003}.

It is well-known that first integral in \eqref{eq:rofp} is the surface area of the graph of function $v$ when $\beta =1$. Thus, model \eqref{eq:rofp} is essentially the minimum surface problem when $\beta =1$. In this work, we will design a fast algorithm which has good convergence properties uniformly with respect to $\beta$ which means that our algorithm works for the regularized total variation minimization   model as well as for the minimum surface model.

The Euler-Lagrange equation corresponds to the  minimization problem \eqref{eq:rofp} reads
\begin{equation}
	-\alpha\nabla\cdot\left(\frac{\nabla u}{\sqrt{|\nabla u|^{2}+\beta}}\right)+u-f=0
	\quad \mbox{  in  } \Omega,
	\qquad
	\nabla u \cdot \Bn = 0 \quad \mbox{  on  } \partial \Omega.
	\label{eq:rofpmodelu}
\end{equation}
We want to emphasis once more that our proposed method works uniformly with respect to $\beta \in (0, 1]$.


Numerical solution to the minimization problem \eqref{eq:rofp} poses a challenging problem due to
the presence of a highly nonlinear and non-differentiable term. To
get around this difficulty, a large effort has been devoted to construct
effective schemes for the minimization problem \eqref{eq:rofp} and
the PDE problems \eqref{eq:rofmodelu} in the past two decades. For
instance, the artificial time marching scheme in \cite{Marquina2000,Rudin1992},
the lagged diffusivity fixed-point method in \cite{Dobson1997,Vogel1996},
Chan-Golub-Mulet method in \cite{Chan1999}, the Bregman iteration
in \cite{Goldstein2009}, the augmented Lagrangian technique in \cite{Wu2010}
and some others in \cite{Bartels2012,Bartels2018,Bartels2015,Chambolle2004,Lee2019,Xu2010}.
However, most of these numerical algorithms are gradient-decent type
and thus only first-order. Thus, the main motivation of this work
is to develop an effective second-order algorithm for the model \eqref{eq:rofpmodelu}.

It is well-known that a ``good'' algorithm for the nonlinear problem
should include not only fast iterative methods but also fast linear
solvers for the linear systems obtained after linearization. Solving
the linear systems is usually the most important, challenging and
time-consuming part in the overall simulation, which is due to the
large-scale and ill-conditions of the linear systems. For
the underlying nonlinear PDE problem \eqref{eq:rofpmodelu}, the
condition number of the linear system tends to infinity when the mesh
size is approaching zero. Moreover, the variability of the parameters,
such as the penalization parameter $\alpha$ and the regularization parameter $\beta$, can additionally
influence the scale of the condition number of the system. However,
little work has been done to develop robust and efficient solvers
for the resulting linear systems. Here, the term ``robust'' posses
two underlying merits: (i) The convergence rate is independent of
the mesh size. (ii) The method is robust with respect to parameters.
In order to improve the efficiency of the numerical simulations, we
will also focus on designing robust preconditioners for the linearized
problem.

Based on the discussion carried out in the previous paragraphs, the
purpose of this paper is to propose a preconditioned Newton method
for primal-dual finite element discretization of the ROF model. We
shall adopt a primal-dual formulation of the  model \eqref{eq:rofpmodelu} which contains
the primal variable $u$, the dual variable $\Bp$ and the multiplier $\lambdabf$. We first discretize the primal-dual system
by using mixed finite element methods, and then linearize the discrete
nonlinear system by Newton\textquoteright s method. Following the
operator preconditioning framework in \cite{Mardal2011}, we develop
block diagonal preconditioners for the linearized problem.
The derivation exploits its well-posedness on appropriate Sobolev
spaces equipped with proper norms. We also rigorously prove the condition
number of the preconditioned operator is uniformly bounded by a
constant independent of the mesh size, the penalization parameter $\alpha$ and the regularization parameter $\beta$. Thus, the proposed preconditioners are robust with respect
to the mesh size, the smoothing parameter, the regularization parameter,
and the iterative step. As a product, we develop a second-order Newton
scheme with robust and optimal preconditioners for the  model \eqref{eq:rofpmodelu}.
Finally, numerical experiments are supplied to test the accuracy
of our schemes and validate the uniform robustness of the proposed preconditioners.

We want to emphasis that the essential ideas presented in this work is rather general and can be used for other nonlinear problems. In \cite{Winther2009}, the harmonic map problem was considered. A uniform preconditioner for the Newton iteration was also constructed using operator preconditioning framework published later in \cite{Mardal2011}. All these show that the methodology presented here can be used for a large class of problems.

The remainder of the paper is structured as follows. In Section \ref{sec:model},
we introduce a primal-dual formulation of the ROF model and present
its Newton iterations. Section \ref{sec:fem} is devoted to introducing
the finite element discretization and giving Newton\textquoteright s
linearization for the discrete problem. In Section \ref{sec:wellposed},
we derive the well-posedness of the discrete problem on
chosen spaces equipped with subtle norms. We propose and analyze
the robust preconditioners in Section \ref{sec:precond}. We carry
out several numerical experiments in Section \ref{sec:num} to
confirm the efficiency of our proposed algorithms. The paper ends
with a concluding remark in Section \ref{sec:conlu}.

\section{Our proposed model and Newton algorithm\label{sec:model}}
First we introduce some Sobolev spaces and norms used in this paper. Throughout the paper, we shall denote vector-valued quantities by boldface notations. Let $L^{2}(\Omega)$ be the usual Hilbert space of square integrable functions which is equipped with the following inner product and norm,
$$
\langle u, v\rangle :=\int_{\Omega} u(x) v(x) \mathrm{d} x, \qquad \left\Vert u\right\Vert\coloneqq \langle u, u\rangle^{1 / 2}.
$$
Let $H^{1}(\Omega)$ be its subspace with square integrable gradients, and its standard norm.
We also use the space $L^{\infty}$ with its canonical norm, $\|v\|_{\infty}=\underset{x\in\Omega}{\operatorname{ess} \sup }|v(x)|$. For a vector $\mathbf{x} = (x_1, x_2, \cdots, x_d) \in\mathbb{R}^{d}$,
we shall use the notation
\[
|\mathbf{x}|_{\beta}=\sqrt{|\mathbf{x}|^{2}+\beta}=\sqrt{\sum_{i=1}^{d}x_{i}^{2}+\beta}.
\]
For notation simplicity,  we will also use $\langle \cdot, \cdot\rangle$  throughout this work to denote $L^2$-type of  inner product for vectors, functions include vector functions and duality pairing. From the context where this notation is used, it is clear which inner product or duality this notation is referring to.
As usual, $\nabla$ and $\nabla \cdot$ will be used as (distributional)  gradient and divergence operators.



Inspired by \cite{Wu2010}, we introduce the auxiliary variable $\boldsymbol{p}=\nabla u$ and reformulate \eqref{eq:rofp} into an equivalent constrained minimization problem:
\be
\min_{\substack{\Bp, u \\  \Bp=\nabla u} }\left\{ \int_{\Omega}\alpha|\Bp|_{\beta}{\rm d}x+\frac{1}{2}\int_{\Omega}(u-f)^{2}{\rm d}x
\right\}. \label{eq:constr_pro}
\ee
The constraint condition can be enforced by use of a Lagrange multiplier $\mubf$, and we then seek stationary points to the Lagrangian functional
\[
\Cl(\Bq,v,\mathbf{\mubf})=\int_{\Omega}
\left(\alpha|\Bq|_{\beta}+\frac{1}{2}(v-f)^{2}-\mathbf{\mubf}\cdot(\Bq-\nabla v)\right){\rm d}x.
\]
Let $(\Bp, u, \lambdabf)$ be one saddle point for this problem. As in \cite{Lao2021,Daniel2012,Tai2009},  the first-order optimality condition for \eqref{eq:constr_pro} is:
\begin{equation}
\alpha\Bp/|\Bp|_{\beta}-\mathbf{\lambdabf}=0,\quad u-\nabla\cdot\mathbf{\lambdabf}=f,\quad-\Bp+\nabla u=0\quad \mbox{  in  } \Omega, \label{eq:ROFmodel}
\end{equation}
in conjunction with the following boundary
conditions
\begin{equation}
\mathbf{\lambdabf}\cdot\Bn=\nabla u\cdot\Bn=0\qquad\text{ on }\partial\Omega,\label{eq:ROFbc}
\end{equation}
Different from the original Euler-Lagrange equation \eqref{eq:rofpmodelu}
for $u$, this system contains three variables, i.e. functions $u$, $\Bp$
and $\lambdabf$. Following \cite{Chan1999}, the method is dubbed as the primal-dual method.

To solve the nonlinear system \eqref{eq:ROFmodel}-\eqref{eq:ROFbc},
we apply Newton's method as the linearization technique. For convenience,
we write this system in the compact form $F(\Bp,u,\lambdabf)=0$,
where $F$ is the non-linear map given by
\[
F:(\Bp,u,\lambdabf)\mapsto\left(\alpha\Bp/|\Bp|_{\beta}-\lambdabf,u-f-\nabla\cdot\lambdabf,-\Bp+\nabla u\right).
\]
Therefore, Newton's method to solve this problem is: Given $\left(\Bp^{n},u^{n},\lambdabf^{n}\right)$,
compute $\left(\Bp^{n+1},u^{n+1},\lambdabf^{n+1}\right)$ by
\begin{equation}
\left(\Bp^{n+1},u^{n+1},\lambdabf^{n+1}\right)=\left(\Bp^{n},u^{n},\lambdabf^{n}\right)+\left(\delta\Bp^n,\delta u^n,\delta\lambdabf^n\right),\label{eq:update}
\end{equation}
where the correction $\left(\delta\Bp^n,\delta u^n,\delta\lambdabf^n\right)$
is defined by
\begin{equation}
DF\left(\Bp^{n},u^{n},\lambdabf^{n}\right)\left(\delta\Bp^n,\delta u^n,\delta\lambdabf^n\right)=-F\left(\Bp^{n},u^{n},\lambdabf^{n}\right).\label{eq:Newton}
\end{equation}
 As usual, $DF\left(\Bp^{n},u^{n},\lambdabf^{n}\right)$ is the
Fr{\'e}chet derivative of the operator $F$ at $\left(\Bp^{n},u^{n},\lambdabf^{n}\right)$.
For a given vector field $\Br$, for simplicity, assume that
$\Br(x)\neq0$ for all $x\in\Omega,$ define the matrix valued
function $H(\Br)=H(\Br(x))$ by
\[
H(\Br)=\frac{1}{|\Br|_{\beta}}\left(\mathrm{I}-\frac{\Br\Br^{t}}{|\Br|_{\beta}^{2}}\right).
\]
Here $\Br^{t}$ denotes the transpose of $\Br$. It
is easy to see that the matrix $H(\Br)$ is symmetric and satisfies
\begin{equation}
\frac{\beta}{|\Br|_{\beta}^{3}}\leq\frac{H(\Br)\boldsymbol{\mathbf{\xi}}\cdot\boldsymbol{\xi}}{\boldsymbol{\mathbf{\xi}}\cdot\boldsymbol{\xi}}\leq\frac{1}{|\Br|_{\beta}},\quad\forall\boldsymbol{\xi}\in\mathbb{R}^{d}\backslash\left\{ \boldsymbol{0}\right\} .\label{eq:MatrixH}
\end{equation}
In addition, the matrix $H(\Br)$ is uniformly elliptic provided
that $\Br\in\boldsymbol{L}^{\infty}(\Omega)$.

As a consequence, for the $n$-th step of the Newton iteration the
residual system \eqref{eq:Newton} can be written as:
\begin{equation}
\begin{cases}
\alpha H(\Bp^{n})\delta\Bp^n-\delta\lambdabf^n & =\Br^n_{\Bp},\\
\delta u^n-\nabla\cdot\delta\lambdabf^n & =r^n_{u},\\
-\delta\Bp^n+\nabla\delta u^n & =\Br^n_{\lambdabf},
\end{cases}\label{eq:Residual}
\end{equation}
where the right-hand sides are given by
\begin{align*}
\Br^n_{\Bp} & \coloneqq-\alpha\Bp^{n}/|\Bp^{n}|_{\beta}+\mathbf{\lambdabf}^{n},\\
r^n_{u} & \coloneqq f-u^{n}+\nabla\cdot\mathbf{\lambdabf}^{n},\\
\Br^n_{\lambdabf} & \coloneqq\Bp^{n}-\nabla u^{n}.
\end{align*}
In subsequent sections, additional details on the iterations will
be provided.

\section{Finite element discretization\label{sec:fem}}

In this subsection, we introduce the finite element discretization
of the system \eqref{eq:ROFmodel}. Let $\mathcal{T}_{h}$ be a quasi-uniform
and shape-regular simplex mesh of $\Omega$ with mesh size $h$.
For any integer $k\geq0, T\in\mathcal{T}_{h}$, let $P_{k}(T)$ be the space of polynomials
of degree $k$, and define $\boldsymbol{P}_{k}(T)= (P_{k}(T))^{d}$.
We associate a triple of piecewise polynomial, finite-dimensional
spaces to approximate the solution $\left(\Bp,u,\lambdabf\right)$,
\begin{align*}
\BV_{h} & \coloneqq\left\{ \Bq\in\boldsymbol{L}^{2}(\Omega):\left.\Bq\right|_{T}\in\boldsymbol{P}_{0}(T),\quad\forall T\in\mathcal{T}_{h}\right\} ,\\
U_{h} & \coloneqq\left\{ v\in H^{1}(\Omega):\left.v\right|_{T}\in P_{1}(T),\quad\forall T\in\mathcal{T}_{h}\right\} ,\\
\BW_{h} & \coloneqq\left\{ \mubf\in\boldsymbol{L}^{2}(\Omega):\left.\mubf\right|_{T}\in\boldsymbol{P}_{0}(T),\quad\forall T\in\mathcal{T}_{h}\right\} .
\end{align*}

Based on the above finite element spaces, the finite element approximation
of the system \eqref{eq:ROFmodel} is formulated as follows: Find
$\left(\Bp_{h},u_{h},\lambdabf_{h}\right)\in\BV_{h}\times U_{h}\times\BW_{h}$
such that for any $(\Bq_{h},v_{h},\mubf_{h})\in\BV_{h}\times U_{h}\times\BW_{h}$,
\begin{equation}
\begin{cases}
\langle \alpha\Bp_{h}/|\Bp_{h}|_{\beta},\Bq_{h}\rangle -\langle \mathbf{\lambdabf}_{h},\Bq_{h}\rangle & =0,\\
\langle u_{h},v_{h}\rangle +\langle \mathbf{\lambdabf}_{h},\nabla v_{h}\rangle  & =\langle f,v_{h}\rangle ,\\
-\langle \Bp_{h},\mubf_{h}\rangle +\langle \nabla u_{h},\mubf_{h}\rangle  & =0.
\end{cases}\label{eq:weakhfull}
\end{equation}
Since we are interested in developing fast solvers for the discrete
problem, we do not elaborate on the well-posedness of \eqref{eq:weakhfull}
and simply assume that it has a unique solution.


We shall apply the Newton algorithm \eqref{eq:Newton} to the above finite element system as well.
Let $\left(\Bp_{h}^{n},u_{h}^{n},\lambdabf_{h}^{n}\right)\in\BV_{h}\times U_{h}\times\BW_{h}$
be the approximate solutions of \eqref{eq:weakhfull} at the $n$-th Newton
iteration. From the linearization in \eqref{eq:Residual}, for each
step of the Newton iteration, the residual equation reads: Find $\left(\delta\Bp^n_{h},\delta u^n_{h},\delta\lambdabf^n_{h}\right)\in\BV_{h}\times U_{h}\times\BW_{h}$
such that for any $(\Bq_{h},v_{h},\mubf_{h})\in\BV_{h}\times U_{h}\times\BW_{h}$,
\begin{equation}
\begin{cases}
\langle \alpha H(\Bp_{h}^{n})\delta\Bp^n_{h},\Bq_{h}\rangle-\langle \delta\lambdabf^n_{h},\Bq_{h}\rangle  & =R^n_{\Bp}\left(\Bq_{h}\right),\\
\langle \delta u^n_{h},v_{h}\rangle +\langle \delta\lambdabf^n_{h},\nabla v_{h}\rangle  & =R^n_{u}\left(v_{h}\right),\\
-\langle \delta\Bp^n_{h},\mubf_{h}\rangle +\langle \nabla\delta u^n_{h},\mubf_{h}\rangle  & =R^n_{\lambdabf}\left(\mubf_{h}\right),
\end{cases}\label{eq:weakhnewton}
\end{equation}
where the residual functionals are defined by
\begin{align*}
R^n_{\Bp}\left(\Bq_{h}\right) & \coloneqq\langle -\alpha\Bp_{h}^{n}/|\Bp_{h}^{n}|_{\beta}+\mathbf{\lambdabf}_{h}^{n},\Bq_{h}\rangle ,\\
R^n_{u}\left(v_{h}\right) & \coloneqq\langle f-u_{h}^{n},v_{h}\rangle -\langle \lambdabf_{h}^{n},\nabla v_{h}\rangle ,\\
R^n_{\lambdabf}\left(\mubf_{h}\right) & \coloneqq\langle \Bp_{h}^{n}-\nabla u_{h}^{n},\mubf_{h}\rangle .
\end{align*}
Afterwards, the new solution $\left(\Bp_{h}^{n+1},u_{h}^{n+1},\lambdabf_{h}^{n+1}\right)$
is given by
\begin{equation}
\left(\Bp_{h}^{n+1},u_{h}^{n+1},\lambdabf_{h}^{n+1}\right)=\left(\Bp_{h}^{n},u_{h}^{n},\lambdabf_{h}^{n}\right)+\left(\delta\Bp^n_{h},\delta u^n_{h},\delta\lambdabf^n_{h}\right).\label{eq:updateh}
\end{equation}
Given a $\Br \in \BV_{h}$,  we define the bilinear form $a_\Br(\cdot,\cdot)$ as:
\begin{align*}
a_{\Br}\left((\Bp_{h},u_{h},\lambdabf_{h}),(\Bq_{h},v_{h},\mubf_{h})\right) & \coloneqq\langle \alpha H(\Br)\Bp_{h},\Bq_{h}\rangle -\langle \mathbf{\lambdabf}_{h},\Bq_{h}\rangle +\langle u_{h},v_{h}\rangle \nonumber\\
 & \qquad+\langle \mathbf{\lambdabf}_{h},\nabla v_{h}\rangle -\langle \Bp_{h},\mubf_{h}\rangle +\langle \nabla u_{h},\mubf_{h}\rangle,
\end{align*}
and the residual functional $\ell_h(\cdot )$ as:
\[
\ell(\Bq_{h},v_{h},\mubf_{h})\coloneqq R^n_{\Bp}\left(  \Bq_{h}\right)+R^n_{u}\left(v_{h}\right )  +R^n_{\lambdabf}\left ( \mubf_{h}\right )  .
\]
Write $X_{h}\coloneqq\BV_{h}\times U_{h}\times\BW_{h}$, then the problem \eqref{eq:weakhnewton} can be equivalently written as:
Given $\left(\Bp_{h}^{n},u_{h}^{n},\lambdabf_{h}^{n}\right)\in X_{h}$,
find $\left(\delta\Bp^n_{h},\delta u^n_{h},\delta\lambdabf^n_{h}\right)\in X_{h}$
such that
\begin{equation}
a_{\Bp^n_{h}}\left((\delta\Bp^n_{h},\delta u^n_{h},\delta\lambdabf^n_{h}),(\Bq_{h},v_{h},\mubf_{h})\right)=\ell(\Bq_{h},v_{h},\mubf_{h}),\quad\forall(\Bq_{h},v_{h},\mubf_{h})\in X_{h}.\label{eq:weakh}
\end{equation}
%

For a given $\Br\in \BV_{h}$, we introduce the operator $\mathcal{A}_{h}(\Br): X_h \rightarrow X_h^{*}$ defined by
\begin{equation}
\langle\mathcal{A}_{h}(\Br)(\Bp_{h}, u_{h},\lambdabf_{h}),(\Bq_{h},v_{h},\mubf_{h})\rangle \coloneqq a_{\Br}\left((\Bp_{h},u_{h},\lambdabf_{h}),(\Bq_{h},v_{h},\mubf_{h})\right),\,\forall (\Bp^n_{h}, u^n_{h},\lambdabf^n_{h}),\,(\Bq_{h},v_{h},\mubf_{h})\in X_{h}.\label{eq:Ar}
\end{equation}
We remind that  $\langle\cdot,\cdot\rangle$ here refers to the  duality pairing between $X_{h}^{*}$ and $X_{h}$. Roughly speaking, $\mathcal{A}_{h}(\Br)$ is the discrete version of the following operator,
\begin{equation}
\mathcal{A}(\Br)\coloneqq\left(\begin{array}{ccc}
\alpha H(\Br) & 0 & -\mathbf{I}\\
0 & \mathrm{I} & -\nabla\cdot\\
-\mathbf{I} & \nabla & 0
\end{array}\right). \label{eq:cAr}
\end{equation}
We first note the form $\langle\mathcal{A}_{h}(\Br)\cdot,\cdot\rangle$
is symmetric, reflecting the symmetry of the saddle point operator
$\mathcal{A}_{h}(\Br)$. Besides that, for the $n$-th step of
the Newton iteration, we need to solve a linear system with the operator
$\mathcal{A}_{h}(\Bp_{h}^{n})$ as the coefficient matrix. Since the operator $\mathcal{A}_{h}(\Br)$ is indefinite and symmetric, we solve the linear system by the minimum residual method (MINRES) \cite{Mardal2011}. However, as one of the  Krylov space methods, the convergence rate of MINRES depends on the condition number and the spectrum of $\mathcal{A}_{h}(\Br)$.
This motivates
the study of efficient preconditioners for the operator $\mathcal{A}_{h}(\Br)$.
\begin{rem}
Using the inverse estimate in the finite element space $\BV_{h}$,
we have
\begin{equation}
\left\Vert \Bp_{h}\right\Vert _{\infty}\le C_{h}\left\Vert \Bp_{h}\right\Vert,\label{eq:inverse}
\end{equation}
where the embedding constant $C_{h}$ depends on the
mesh size but is finite. Thus, $H(\Bp_{h}^{n})\in\BL^{\infty}(\Omega)$ is uniformly elliptic, c.f \eqref{eq:MatrixH}. Accordingly, problems \eqref{eq:weakhfull} and \eqref{eq:weakhnewton}
are well-defined.
\end{rem}

\section{Well-posedness of the linear system for the Newton algorithm \label{sec:wellposed}}

Now we discuss the well-posedness of scheme \eqref{eq:weakh}, which
is the foundation of the preconditioners we proposed. For this end,
we will use Babu\v{s}ka-Brezzi theory to analyze the mapping properties of the
operator $\mathcal{A}_{h}(\Br)$ for any $\Br  \in \BV_h $.

As discussed in \cite{Mardal2011}, ensuring that the continuity
constants and the inf\textendash sup constants are independent of
the physical parameters and the discretized parameters is crucial to design robust
block preconditioners for solving our problem. Note that the natural bound
on the saddle point operator $\mathcal{A}_{h}(\Br)$ depends on
the vector field $\Br$. Therefore, we equip the space $X_{h}$
with the following $\Br$-dependent inner-product
\begin{equation}
\left((\Bp_h,u_h,\lambdabf_h),(\Bq_h,v_h,\mubf_h)\right) _{X_{r}}\coloneqq\alpha\langle H(\Br)\Bp_h,\Bq_h\rangle +\langle u_h, v_h\rangle+\alpha\langle H(\Br)\nabla u_h,\nabla v_h\rangle +\alpha^{-1}\langle H(\Br)^{-1}\lambdabf_h,\mubf_h\rangle ,
\label{eq:inner}
\end{equation}
and norm
\begin{equation}
\left\Vert (\Bq_h,v_h,\mubf_h)\right\Vert _{X_{r}}^{2}\coloneqq\alpha\langle H(\Br)\Bq_h,\Bq_h\rangle +\left\Vert v_h\right\Vert ^{2}+\alpha\langle H(\Br)\nabla v_h,\nabla v_h\rangle +\alpha^{-1}\langle H(\Br)^{-1}\mubf_h,\mubf_h\rangle .\label{eq:nrom}
\end{equation}
It is clear that the corresponding norm for the dual space $X_{h}^{*}$
depends on $\Br$ and $\alpha$. Thanks to the estimate \eqref{eq:inverse}
and \eqref{eq:MatrixH}, the matrix $H(\Br)$ is uniformly
elliptic and invertible. Hence, the weighted norm \eqref{eq:nrom}
is well-defined in $X_{h}$.

After the introduction of these notations, we are able to show that
the operator $\mathcal{A}_{h}(\Br)$ has the following properties.
\begin{lem}[Boundedness of $\mathcal{A}_{h}(\Br)$]
\label{lem:bound} There is a positive constant $c_{0}$, independent
of $\Br,\alpha,\beta$ and $h$, such that for all $(\Bp_h,u_h,\lambdabf_h)$, $(\Bq_h,v_h,\mubf_h)\in X_{h}$,
we have
\[
\langle\mathcal{A}_{h}(\Br)(\Bp_h,u_h,\lambdabf_h),(\Bq_h,v_h,\mubf_h)\rangle\leq c_{0}\left\Vert (\Bp_h,u_h,\lambdabf_h)\right\Vert _{X_{r}}\left\Vert (\Bq_h,v_h,\mubf_h)\right\Vert _{X_{r}}.
\]
\end{lem}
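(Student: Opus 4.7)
The plan is to bound $\langle \mathcal{A}_h(\Br)(\Bp_h,u_h,\lambdabf_h),(\Bq_h,v_h,\mubf_h)\rangle$ term by term by combinations of the partial seminorms making up $\|\cdot\|_{X_r}$, and then to assemble the pieces into the desired product. The only nontrivial ingredient is a weighted Cauchy--Schwarz inequality for the cross terms pairing $\lambdabf_h$ with $\Bq_h$ or $\nabla v_h$ (and, symmetrically, $\mubf_h$ with $\Bp_h$ or $\nabla u_h$), using that $H(\Br)$ is symmetric positive definite by \eqref{eq:MatrixH} and the remark following it. Specifically, if $H^{1/2}$ denotes the SPD matrix square root of $H(\Br)$, the identity $\langle \lambdabf_h,\Bq_h\rangle = \langle H(\Br)^{-1/2}\lambdabf_h,\, H(\Br)^{1/2}\Bq_h\rangle$ combined with ordinary Cauchy--Schwarz yields
\[
|\langle \lambdabf_h,\Bq_h\rangle| \le \langle H(\Br)^{-1}\lambdabf_h,\lambdabf_h\rangle^{1/2}\,\langle H(\Br)\Bq_h,\Bq_h\rangle^{1/2}.
\]

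With that inequality in hand, I would insert the factors $\alpha^{1/2}$ and $\alpha^{-1/2}$ appropriately so the two factors on the right become recognizable pieces of $\|\cdot\|_{X_r}$. For the diagonal terms, $\alpha\langle H(\Br)\Bp_h,\Bq_h\rangle$ is handled by the Cauchy--Schwarz inequality associated with the SPD form $\alpha\langle H(\Br)\cdot,\cdot\rangle$, giving a bound by $(\alpha\langle H(\Br)\Bp_h,\Bp_h\rangle)^{1/2}(\alpha\langle H(\Br)\Bq_h,\Bq_h\rangle)^{1/2}$, and the mass term $\langle u_h,v_h\rangle$ is the standard $L^2$ Cauchy--Schwarz. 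For the four cross terms, $-\langle \lambdabf_h,\Bq_h\rangle$ and $\langle \lambdabf_h,\nabla v_h\rangle$ (and the symmetric pair involving $\mubf_h$), the estimate above together with the trivial split $1 = \alpha^{1/2}\cdot \alpha^{-1/2}$ gives, for instance,
\[
|\langle \lambdabf_h,\nabla v_h\rangle| \le \bigl(\alpha^{-1}\langle H(\Br)^{-1}\lambdabf_h,\lambdabf_h\rangle\bigr)^{1/2}\,\bigl(\alpha\langle H(\Br)\nabla v_h,\nabla v_h\rangle\bigr)^{1/2},
\]
and each factor is dominated by the corresponding $\|\cdot\|_{X_r}$.

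To finish, I would add all six bounds, replace every individual seminorm by the full norm $\|(\Bp_h,u_h,\lambdabf_h)\|_{X_r}$ or $\|(\Bq_h,v_h,\mubf_h)\|_{X_r}$, and collect a constant (six, or any convenient absolute constant from an application of the elementary inequality $\sum a_i b_i \le (\sum a_i^2)^{1/2}(\sum b_i^2)^{1/2}$). This produces the claim with a constant $c_0$ depending only on an absolute combinatorial factor and, in particular, independent of $\Br$, $\alpha$, $\beta$, and $h$.

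There is no real obstacle here; the statement is a continuity bound tailored to the weighted norm $\|\cdot\|_{X_r}$, which was defined precisely so that the $\Br$-dependence in $\mathcal{A}_h(\Br)$ cancels against the weights $H(\Br)$, $H(\Br)^{-1}$ in the norm. The one point that deserves care is to make sure the $\alpha$-weighting is chosen consistently in all four cross terms so that no factor of $\alpha^{\pm 1}$ leaks into $c_0$; this is why the norm contains $\alpha\langle H(\Br)\cdot,\cdot\rangle$ on the $\Bp$- and $\nabla u$-components but $\alpha^{-1}\langle H(\Br)^{-1}\cdot,\cdot\rangle$ on the $\lambdabf$-component, matching the scaling of the identity off-diagonal blocks in \eqref{eq:cAr}.
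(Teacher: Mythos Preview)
Your proposal is correct and follows exactly the approach the paper has in mind; the paper's own proof is the one-line remark that the bound ``follows directly from the definition of the operator $\mathcal{A}_{h}(\Br)$ and the definition of $\left\Vert \cdot\right\Vert _{X_{r}}$ with $c_{0}=2$,'' and your write-up simply unpacks that via the weighted Cauchy--Schwarz estimates you describe. The only refinement is that, by grouping the six cross terms and applying the discrete Cauchy--Schwarz inequality you mention, one can obtain the sharper constant $c_0=2$ stated in the paper rather than $c_0=6$.
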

\begin{proof}
This follows directly from the definition of the operator $\mathcal{A}_{h}(\Br)$
and the definition of $\left\Vert \cdot\right\Vert _{X_{r}}$ with
$c_{0}=2$.
\end{proof}

We define the associated kernel space $Z_{h}\subset\BV_{h}\times U_{h}$
by
\[
Z_{h}=\left\{ (\Bp_{h},u_{h})\in\BV_{h}\times U_{h}\mid\langle \nabla u_{h}-\Bp_{h},\mubf_{h}\rangle =0, \quad\forall\mubf_{h}\in\BW_{h}\right\} .
\]
Note that $\nabla u_{h}\in\BW_{h}$ and $\boldsymbol{p}_{h}\in\BW_{h}$,
thus taking $\mubf_{h}=\nabla u_{h}-\Bp_{h}$ gives $\Bp_{h}=\nabla u_{h}$.
Consequently, the discrete kernel space $Z_{h}$ is further portrayed
as
\[
Z_{h}=\left\{ (\Bp_{h},u_{h})\in\BV_{h}\times U_{h}\mid\Bp_{h}=\nabla u_{h}\right\} .
\]

\begin{lem}[Coercivity on the kernel space]
\label{lem:cor} There is a positive constant $c_{1}$, independent
of $\Br,\alpha,\beta$ and $h$, such that for all $(\Bp_h,u_h)\in Z_{h}$,
we have
\[
\langle\mathcal{A}_{h}(\Br)(\Bp_h,u_h,0),(\Bp_h,u_h,0)\rangle\geq c_{1}\left\Vert (\Bp_h,u_h,0)\right\Vert _{X_{r}}^{2}.
\]
\end{lem}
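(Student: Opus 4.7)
The plan is to verify the coercivity inequality by direct computation, exploiting the special structure of the kernel space $Z_h$ and the fact that we are testing with a vector whose multiplier component is zero. Since the antisymmetric off-diagonal blocks of $\mathcal{A}_h(\Br)$ couple only the $\Bp_h$ and $\lambdabf_h$ components and the $u_h$ and $\lambdabf_h$ components, setting $\lambdabf_h = 0$ kills all coupling terms and leaves only the two diagonal contributions, making the bilinear form trivially positive. The work is then to compare this with the corresponding $X_r$-norm.

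First I would unfold $\langle\mathcal{A}_h(\Br)(\Bp_h,u_h,0),(\Bp_h,u_h,0)\rangle$ using the definition of $a_\Br$. The terms $\langle \lambdabf_h,\Bq_h\rangle$, $\langle\lambdabf_h,\nabla v_h\rangle$, $\langle \Bp_h,\mubf_h\rangle$, and $\langle\nabla u_h,\mubf_h\rangle$ all vanish because at least one argument is zero, leaving
\[
\langle\mathcal{A}_h(\Br)(\Bp_h,u_h,0),(\Bp_h,u_h,0)\rangle = \alpha\langle H(\Br)\Bp_h,\Bp_h\rangle + \|u_h\|^2.
\]

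Next I would evaluate the squared norm. From the definition \eqref{eq:nrom},
\[
\|(\Bp_h,u_h,0)\|_{X_r}^2 = \alpha\langle H(\Br)\Bp_h,\Bp_h\rangle + \|u_h\|^2 + \alpha\langle H(\Br)\nabla u_h,\nabla u_h\rangle,
\]
since the $H(\Br)^{-1}$ term vanishes. Now I invoke the kernel condition: because $(\Bp_h,u_h)\in Z_h$, we have $\Bp_h = \nabla u_h$, which reduces the third term to $\alpha\langle H(\Br)\Bp_h,\Bp_h\rangle$. Hence
\[
\|(\Bp_h,u_h,0)\|_{X_r}^2 = 2\alpha\langle H(\Br)\Bp_h,\Bp_h\rangle + \|u_h\|^2.
\]

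Comparing the two expressions, the left-hand side of the desired inequality is obtained from the right-hand side by replacing the factor $2$ with $1$ in front of $\alpha\langle H(\Br)\Bp_h,\Bp_h\rangle$. Therefore choosing $c_1 = 1/2$ gives the result immediately, and this constant depends neither on $\Br$, nor on $\alpha,\beta,h$. There is essentially no obstacle in this argument: the only subtle point is making sure the cross terms really vanish with the zero multiplier, which follows from inspection of $a_\Br$. The real work on the well-posedness side will come in the inf-sup condition for the full operator, not in this coercivity on the kernel.
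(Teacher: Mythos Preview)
Your proof is correct and follows essentially the same approach as the paper: both compute the bilinear form with $\lambdabf_h=0$ to obtain $\alpha\langle H(\Br)\Bp_h,\Bp_h\rangle+\|u_h\|^2$, use the kernel condition $\Bp_h=\nabla u_h$ to identify the gradient term in the norm with the $\Bp_h$ term, and conclude with $c_1=1/2$. The only cosmetic difference is that the paper splits the $\Bp_h$ term in half and rewrites one half as a gradient term, whereas you substitute $\nabla u_h=\Bp_h$ into the norm; these are the same manipulation.
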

\begin{proof}
Just observe that on $Z_{h}$
\begin{align*}
\langle\mathcal{A}_{h}(\Br)(\Bp_h,u_h,0),(\Bp_h,u_h,0)\rangle & =\langle \alpha H(\Br)\Bp_h,\Bp_h\rangle +\left\Vert u_h\right\Vert ^{2}\\
 & =\frac{1}{2}\langle \alpha H(\Br)\Bp_h,\Bp_h\rangle +\left\Vert u_h\right\Vert ^{2}+\frac{1}{2}\langle \alpha H(\Br)\nabla u_h,\nabla u_h\rangle ,
\end{align*}
 and that
\[
\left\Vert (\Bp_h,u_h,0)\right\Vert _{X_{r}}^{2}=\langle \alpha H(\Br)\Bp_h,\Bp_h\rangle +\left\Vert u_h\right\Vert ^{2}+\alpha\langle H(\Br)\nabla u_h,\nabla u_h\rangle .
\]
 Then the desired inequality holds with $c_{1}=1/2$.
\end{proof}
\begin{lem}[Inf-sup condition]
\label{lem:inf-sup} There is a positive constant $c_{2}$, independent
of $\Br,\alpha,\beta$ and $h$, such that
\[
\sup_{(\Bq_h,v_h)\in\BV_{h}\times U_{h}}\frac{\langle\mathcal{A}_{h}(\Br)(0,0,\lambdabf_h),(\Bq_h,v_h,0)\rangle}{\left\Vert (\Bq_h,v_h,0)\right\Vert _{X_{r}}}\geq c_{2}\left\Vert (0,0,\lambdabf_h)\right\Vert _{X_{r}},\quad\forall\lambdabf\in\BW_{h}.
\]
\end{lem}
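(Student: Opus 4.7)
The plan is to realize the inf--sup quantity by an explicit choice of test function, exploiting the fact that $\BV_h$ and $\BW_h$ are, as sets, the \emph{same} space of piecewise constant vector fields on $\mathcal{T}_h$. This means that the natural ``Riesz representative'' of $\lambdabf_h$ in the $\Br$-weighted inner product, which one would use in a continuous setting, is actually admissible as a test function here.

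First I would specialize the bilinear form to the arguments at hand. Reading off from the definition of $a_{\Br}$, all six terms collapse except two, giving
\begin{equation*}
\langle\mathcal{A}_{h}(\Br)(0,0,\lambdabf_h),(\Bq_h,v_h,0)\rangle \;=\; -\langle \lambdabf_h,\Bq_h\rangle+\langle \lambdabf_h,\nabla v_h\rangle.
\end{equation*}
This immediately suggests putting $v_h=0$ and testing only against $\Bq_h$; the $\nabla v_h$ term would force additional estimates without improving the constant.

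Next I would choose
\begin{equation*}
\Bq_h \;:=\; -\alpha^{-1} H(\Br)^{-1}\lambdabf_h, \qquad v_h := 0.
\end{equation*}
The crucial admissibility check: since $\Br\in\BV_h$ and $\lambdabf_h\in\BW_h$ are elementwise constant on each $T\in\mathcal{T}_h$, the matrix $H(\Br)|_T$ is a constant symmetric positive definite matrix (uniformly elliptic by \eqref{eq:MatrixH}, hence invertible on each element), so $\Bq_h$ is elementwise constant and lies in $\BV_h$. Substituting, the numerator becomes
\begin{equation*}
-\langle \lambdabf_h,\Bq_h\rangle \;=\; \alpha^{-1}\langle H(\Br)^{-1}\lambdabf_h,\lambdabf_h\rangle \;=\; \left\Vert (0,0,\lambdabf_h)\right\Vert_{X_r}^{2},
\end{equation*}
while, using symmetry of $H(\Br)$,
\begin{equation*}
\left\Vert(\Bq_h,0,0)\right\Vert_{X_r}^{2} \;=\; \alpha\langle H(\Br)\Bq_h,\Bq_h\rangle \;=\; \alpha^{-1}\langle H(\Br)^{-1}\lambdabf_h,\lambdabf_h\rangle \;=\; \left\Vert(0,0,\lambdabf_h)\right\Vert_{X_r}^{2}.
\end{equation*}
Dividing therefore yields exactly $\|(0,0,\lambdabf_h)\|_{X_r}$, which gives the inf--sup bound with $c_2=1$, manifestly independent of $\Br$, $\alpha$, $\beta$, and $h$.

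There is really no substantial obstacle to overcome: the only subtlety is recognizing that the coincidence $\BV_h=\BW_h$ (as sets of piecewise constants) makes the ``continuous'' choice of test function automatically discretely admissible, which is precisely why no Fortin-type construction is needed. The reason the $X_r$-norm was designed with the weights $\alpha H(\Br)$ on the $\Bp$-slot and $\alpha^{-1}H(\Br)^{-1}$ on the $\lambdabf$-slot becomes transparent from this calculation: they are dual weights matched through the identity-like coupling $-\langle\lambdabf_h,\Bq_h\rangle$ in $\mathcal{A}_h(\Br)$, so the ratio is constant-free.
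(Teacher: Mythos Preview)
Your proof is correct and follows essentially the same route as the paper: restrict to $v_h=0$, choose $\Bq_h=-\alpha^{-1}H(\Br)^{-1}\lambdabf_h$, and read off $c_2=1$. Your explicit admissibility check that $\Bq_h\in\BV_h$ (via elementwise constancy of $\Br$ and $\lambdabf_h$) is a detail the paper leaves implicit but is worth stating.
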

\begin{proof}
We have that
\[
\begin{aligned}\sup_{(\Bq_h,v_h)\in\BV_{h}\times U_{h}}\frac{\langle\mathcal{A}_{h}({\Br})(0,0,\lambdabf_h),(\Bq_h,v_h,0)\rangle}{\left\Vert (\Bq_h,v_h,0)\right\Vert _{X_{r}}} & \geq\sup_{(\Bq_h,0)\in\BV_{h}\times U_{h}}\frac{-\langle \lambdabf_h,\Bq_h\rangle }{\left (  \alpha\langle H({\Br})\Bq_h,\Bq_h\rangle \right){}^{1/2}}\\
 & \geq\left(\alpha^{-1}\langle H(\Br)^{-1}\lambdabf_h,\lambdabf_h\rangle \right)^{1/2},
\end{aligned}
\]
 where the last inequality follows by taking $\Bq_h=-\alpha^{-1}H({\Br})^{-1}\lambdabf_h$.
Since
\[
\left\Vert (0,0,\lambdabf_h)\right\Vert _{X_{r}}=\left(\alpha^{-1}\langle H({\Br})^{-1}\lambdabf_h,\lambdabf_h\rangle \right)^{1/2}.
\]
This shows that the desired inequality holds with $c_{2}=1$.
\end{proof}

With the three lemmas above, we obtain the main result of this section.
\begin{thm}
\label{thm:wellposed}At each iteration for the Newton updating, the discretized
problem \eqref{eq:weakhnewton} is well-posed.
\end{thm}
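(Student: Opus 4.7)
The plan is to recognize problem \eqref{eq:weakh} as a symmetric saddle-point problem with the natural 2+1 block structure: the primary block consists of $(\delta\Bp^n_h,\delta u^n_h)\in\BV_h\times U_h$, while the multiplier block is $\delta\lambdabf^n_h\in\BW_h$, enforcing the constraint encoded by the third equation in \eqref{eq:weakhnewton}. Since the ambient spaces are finite dimensional, well-posedness is equivalent to invertibility of the operator $\mathcal{A}_h(\Bp_h^n)$ on $X_h$, and this can be obtained directly from the finite-dimensional version of the Babu\v{s}ka--Brezzi theory.

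First I would verify that the three hypotheses of Brezzi's theorem, applied with the $\Br$-dependent norm $\|\cdot\|_{X_r}$ and $\Br=\Bp_h^n$, are exactly the contents of Lemmas \ref{lem:bound}--\ref{lem:inf-sup}: boundedness of the full bilinear form $a_{\Bp_h^n}(\cdot,\cdot)$ on $X_h\times X_h$ (Lemma \ref{lem:bound}), coercivity of the $(1,1)$-block on the discrete kernel $Z_h$ (Lemma \ref{lem:cor}), and the inf-sup condition for the multiplier block (Lemma \ref{lem:inf-sup}). Crucially, the embedding estimate \eqref{eq:inverse} combined with \eqref{eq:MatrixH} guarantees that $H(\Bp_h^n)$ is symmetric positive definite on $\BV_h$, so $\|\cdot\|_{X_r}$ really is a norm and the weighted inner product \eqref{eq:inner} is admissible; this is what makes the three lemmas applicable at the $n$-th Newton step.

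Then I would conclude by the standard Brezzi argument: for any right-hand side $\ell\in X_h^*$, the conditions above yield a unique solution $(\delta\Bp^n_h,\delta u^n_h,\delta\lambdabf^n_h)\in X_h$, together with the a priori bound
\[
\|(\delta\Bp^n_h,\delta u^n_h,\delta\lambdabf^n_h)\|_{X_r}\leq C(c_0,c_1,c_2)\,\|\ell\|_{X_r^*},
\]
where the constant $C$ depends only on $c_0,c_1,c_2$ and hence is independent of $\Br=\Bp_h^n$, of $\alpha$, $\beta$, and $h$. This delivers well-posedness of \eqref{eq:weakhnewton} uniformly across Newton iterations and discretization parameters, which is the statement of the theorem and simultaneously sets the stage for the parameter-robust block preconditioners in Section \ref{sec:precond}.

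The only subtle point I anticipate is the bookkeeping of what ``uniform'' means here: the norm $\|\cdot\|_{X_r}$ itself depends on $\Br$ and $\alpha$, so the relevant claim is that the well-posedness constants in this parameter-dependent norm are uniform, not that the solution is bounded in some fixed norm. This is exactly the operator-preconditioning viewpoint of \cite{Mardal2011}, and no further estimate is required beyond what the three preceding lemmas already give.
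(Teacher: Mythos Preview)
Your proposal is correct and follows essentially the same approach as the paper: the paper's proof simply invokes Theorem~5.2 in \cite{Mardal2011}, which is precisely the Babu\v{s}ka--Brezzi saddle-point argument you outline, with Lemmas~\ref{lem:bound}--\ref{lem:inf-sup} supplying the required boundedness, kernel coercivity, and inf-sup conditions. Your additional remarks on the role of \eqref{eq:inverse}--\eqref{eq:MatrixH} in ensuring $\|\cdot\|_{X_r}$ is a genuine norm, and on the meaning of ``uniform'' in the parameter-dependent norm, are accurate and anticipate exactly how the well-posedness feeds into \eqref{eq:A_cons} and the preconditioner analysis in Section~\ref{sec:precond}.
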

\begin{proof}
By similar arguments of Theorem 5.2 in \cite{Mardal2011}, it is
straight-forward to reach the conclusion.
\end{proof}

Using adequately weighted spaces, we have that $\mathcal{A}_{h}({\Br})$
is an isomorphism from $X_{h}$ to $X_{h}^{*}$ such that $\left\Vert \mathcal{A}_{h}(\Br)\right\Vert _{\mathcal{L}\left(X_{h},X_{h}^{*}\right)}$
and $\left\Vert \left(\mathcal{A}_{h}(\Br)\right)^{-1}\right\Vert _{\mathcal{L}\left(X_{h}^{*},X_{h}\right)}$
are bounded independently of mesh sizes and the parameters. That is
\begin{equation}
\left\Vert \mathcal{A}_{h}(\Br)\right\Vert _{\mathcal{L}\left(X_{h},X_{h}^{*}\right)}\le C,\quad\left\Vert \left(\mathcal{A}_{h}(\Br)\right)^{-1}\right\Vert _{\mathcal{L}\left(X_{h}^{*},X_{h}\right)}\le c^{-1},\label{eq:A_cons}
\end{equation}
 where the constants $C$ and $c$ only depend on $c_{0},c_{1},c_{2}$.

\section{Robust preconditioners\label{sec:precond}}

In this section, we develop and analyze robust preconditioners.  Let $\mathcal{B}_h(\Br)$ be the \textquotedbl Riesz-operator\textquotedbl{}
mapping from $X_{h}^{*}$ to $X_{h}$, which is induced by the weighted norm $\Vert\cdot\Vert_{X_r}$, for given $(\Bp_{h}, u_{h},\lambdabf_{h})\in X_{h}^{*}$:
$$
\left(\mathcal{B}_h(\Br) (\Bp_{h}, u_{h},\lambdabf_{h}),(\Bq_{h},v_{h},\mubf_{h})\right)_{X_r}\coloneqq\langle (\Bp_{h}, u_{h},\lambdabf_{h}), (\Bq_{h},v_{h},\mubf_{h})\rangle, \quad (\Bq_{h},v_{h},\mubf_{h}) \in X_{h}.
$$
By using \eqref{eq:inner} and \eqref{eq:nrom}, we can see that it takes the following explicit form,
\begin{equation}
\mathcal{B}_{h}(\Br)=\left(\begin{array}{ccc}
\alpha H({\Br}) & 0 & 0\\
0 & S_{\Br,h} & 0\\
0 & 0 & \alpha^{-1}H({\Br})^{-1}
\end{array}\right)^{-1},\label{eq:Br}
\end{equation}
where the operator $S_{\Br,h}\colon\,V_{h}\mapsto V_{h}^{*}$ is defined by
\[
\langle S_{\Br,h}u_{h},v_{h}\rangle \coloneqq\langle u_{h},v_{h}\rangle +\langle \alpha H({\Br})\nabla u_{h},\nabla v_{h}\rangle ,  \qquad \forall u_h, v_h\in V_h.
\]
In fact, $S_{\Br,h}$ is the finite element discretization for the operator $\rmI - \nabla\cdot\left(H\left(\Br\right)\nabla\right)$ with Neumann boundary conditions.
Note that the matrix $H(\Br)$ is symmetric and uniformly elliptic, the second block, $S_{\Br,h}$,
is invertible. Following the operator preconditioning framework \cite{Mardal2011},
the operator $\mathcal{B}_{h}(\Br)$ is proposed as the preconditioner
for $\mathcal{A}_{h}(\Br)$.

In the following we  estimate the condition number of the preconditioned operator $\mathcal{B}_{h}(\Br)\mathcal{A}_{h}(\Br)$ using the following formula:
\begin{equation}\label{eq:kappa} \kappa\left(\mathcal{B}_{h}(\Br)\mathcal{A}_{h}(\Br)\right)=\left\Vert \mathcal{B}_{h}(\Br)\mathcal{A}_{h}(\Br)\right\Vert {}_{\mathcal{L}\left(X_{h},X_{h}\right)}\left\Vert \left(\mathcal{B}_{h}(\Br)\mathcal{A}_{h}(\Br)\right)^{-1}\right\Vert _{\mathcal{L}\left(X_{h},X_{h}\right)}.
\end{equation}
\begin{thm}
\label{thm:condexact}
The condition number $\kappa\left(\mathcal{B}_{h}(\Br)\mathcal{A}_{h}(\Br)\right)$
has a uniform bound, independent
of $\Br,\alpha,\beta$ and $h$, in the sense that
\[
1\le\kappa\left(\mathcal{B}_{h}(\Br)\mathcal{A}_{h}(\Br)\right)\le C/c,
\]
where $C$ and $c$ are defined by \eqref{eq:A_cons}.
\end{thm}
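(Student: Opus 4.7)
The plan is to deduce the bound directly from the operator-preconditioning framework of Mardal--Winther, using the fact that $\mathcal{B}_h(\Br)$ was defined precisely as the Riesz isomorphism associated with the $\|\cdot\|_{X_r}$-inner product. So the essential content is already packaged in the well-posedness bounds \eqref{eq:A_cons} of Section~\ref{sec:wellposed}, and what remains is the composition of operator norms.

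First I would record the key isometry property. By construction of $\mathcal{B}_h(\Br)$ via \eqref{eq:inner}, for every $(\Bp_h,u_h,\lambdabf_h)\in X_h^*$ the element $\mathcal{B}_h(\Br)(\Bp_h,u_h,\lambdabf_h)\in X_h$ is the Riesz representer, so that
\[
\bigl\|\mathcal{B}_h(\Br)(\Bp_h,u_h,\lambdabf_h)\bigr\|_{X_r} \;=\; \bigl\|(\Bp_h,u_h,\lambdabf_h)\bigr\|_{X_r^*}.
\]
Hence $\mathcal{B}_h(\Br):X_h^*\to X_h$ is an isometric isomorphism, with
$\|\mathcal{B}_h(\Br)\|_{\mathcal{L}(X_h^*,X_h)}=\|\mathcal{B}_h(\Br)^{-1}\|_{\mathcal{L}(X_h,X_h^*)}=1$.

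Next I would combine this with \eqref{eq:A_cons}. Applying the submultiplicativity of operator norms to the composition $\mathcal{B}_h(\Br)\mathcal{A}_h(\Br):X_h\to X_h$ yields
\[
\bigl\|\mathcal{B}_h(\Br)\mathcal{A}_h(\Br)\bigr\|_{\mathcal{L}(X_h,X_h)}
\;\le\; \bigl\|\mathcal{B}_h(\Br)\bigr\|_{\mathcal{L}(X_h^*,X_h)}\,\bigl\|\mathcal{A}_h(\Br)\bigr\|_{\mathcal{L}(X_h,X_h^*)}\;\le\;C,
\]
and, using $\bigl(\mathcal{B}_h(\Br)\mathcal{A}_h(\Br)\bigr)^{-1}=\mathcal{A}_h(\Br)^{-1}\mathcal{B}_h(\Br)^{-1}$,
\[
\bigl\|\bigl(\mathcal{B}_h(\Br)\mathcal{A}_h(\Br)\bigr)^{-1}\bigr\|_{\mathcal{L}(X_h,X_h)}
\;\le\; \bigl\|\mathcal{A}_h(\Br)^{-1}\bigr\|_{\mathcal{L}(X_h^*,X_h)}\,\bigl\|\mathcal{B}_h(\Br)^{-1}\bigr\|_{\mathcal{L}(X_h,X_h^*)}\;\le\;c^{-1}.
\]
Plugging both estimates into \eqref{eq:kappa} gives the upper bound $\kappa\le C/c$. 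The lower bound $\kappa\ge 1$ is automatic for any invertible operator $M$ because $\|M\|\,\|M^{-1}\|\ge\|M M^{-1}\|=1$. All constants $c_0,c_1,c_2$ appearing in Lemmas~\ref{lem:bound}--\ref{lem:inf-sup}, and consequently $C$ and $c$, are independent of $\Br,\alpha,\beta,h$, so the bound on $\kappa$ inherits the same independence.

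There is not really a hard step here: the work was already done in Section~\ref{sec:wellposed}, where the three parameter-independent estimates (boundedness, kernel coercivity, and inf--sup) were verified in the weighted norm $\|\cdot\|_{X_r}$. The main thing to make sure of in writing up the proof cleanly is to be explicit that the chosen preconditioner is exactly the Riesz map for the norm in which $\mathcal{A}_h(\Br)$ was shown to be an isomorphism, so that Babu\v{s}ka--Brezzi transfers without loss into a spectral statement on $\mathcal{B}_h(\Br)\mathcal{A}_h(\Br)$.
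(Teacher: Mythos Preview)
Your proof is correct and follows essentially the same route as the paper: both use that $\mathcal{B}_h(\Br)$ is the Riesz map for $\|\cdot\|_{X_r}$ (hence has operator norm~$1$ in each direction), combine this with the bounds \eqref{eq:A_cons} via submultiplicativity, and plug into \eqref{eq:kappa}. Your write-up is slightly more explicit about the isometry and the trivial lower bound $\kappa\ge 1$, but the argument is the same.
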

\begin{proof}
We note that the operator $\mathcal{B}_{h}(\Br)$ has the property
that
\[
\left\Vert \mathcal{B}_{h}({\Br})\right\Vert {}_{\mathcal{L}\left(X_{h}^{*},X_{h}\right)}=\left\Vert \left(\mathcal{B}_{h}({\Br})\right)^{-1}\right\Vert _{\mathcal{L}\left(X_{h},X_{h}^{*}\right)}=1.
\]
 Then we have
\[
\left\Vert \mathcal{B}_{h}(\Br)\mathcal{A}_{h}(\Br)\right\Vert {}_{\mathcal{L}\left(X_{h},X_{h}\right)}=\left\Vert \mathcal{A}_{h}(\Br)\right\Vert {}_{\mathcal{L}\left(X_{h},X_{h}^{*}\right)}\leq C,
\]
 and
\[
\left\Vert \left(\mathcal{B}_{h}(\Br)\mathcal{A}_{h}(\Br)\right)^{-1}\right\Vert _{\mathcal{L}\left(X_{h},X_{h}\right)}\leq\left\Vert \left(\mathcal{A}_{h}(\Br)\right)^{-1}\right\Vert _{\mathcal{L}\left(X_{h}^{*},X_{h}\right)}\left\Vert \left(\mathcal{B}_{h}(\Br)\right)^{-1}\right\Vert _{\mathcal{L}\left(X_{h},X_{h}^{*}\right)}\leq c^{-1}.
\]
We get the desired result using \eqref{eq:kappa}.
\end{proof}

This theorem suggests that $\mathcal{B}_{h}(\Br)$ is a ``good''
preconditioner for $\mathcal{A}_{h}(\Br)$ in the sense that the
preconditioned operator has a condition number which is independent of
the mesh size ($h$), the penalization parameter ($\alpha$), the regularization
parameter ($\beta$) and the iterative step ($n$). We are using MINRES to solve the preconditioned linear system  at each Newton iteration.  We have the following convergence result for the MINRES.
\begin{thm}
\noindent \label{thm:conexact}If $x^{0}$ is the the initial value,
$x^{m}$ is the $m$-th iteration of MINRES method and $x$ is the
exact solution, then there exists a constant $\delta\in(0,1)$, only
depending on the condition number $\kappa\left(\mathcal{B}_{h}(\Br)\mathcal{A}_{h}(\Br)\right)$,
such that
\[
\left\langle \mathcal{B}_{h}(\Br)\mathcal{A}_{h}(\Br)\left(x-x^{m}\right),\mathcal{A}_{h}(\Br)\left(x-x^{m}\right)\right\rangle ^{\frac{1}{2}}\le2\delta^{m}\left\langle \mathcal{B}_{h}(\Br)\mathcal{A}_{h}(\Br)\left(x-x^{0}\right),\mathcal{A}_{h}(\Br)\left(x-x^{0}\right)\right\rangle ^{\frac{1}{2}}.
\]
Moreover, an estimate leads to
\[
\delta=\frac{\kappa\left(\mathcal{B}_{h}(\Br)\mathcal{A}_{h}(\Br)\right)-1}{\kappa\left(\mathcal{B}_{h}(\Br)\mathcal{A}_{h}(\Br)\right)+1}\le\frac{C-c}{C+c}.
\]
\end{thm}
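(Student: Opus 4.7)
I would combine the condition number estimate from Theorem~\ref{thm:condexact} with the classical polynomial--approximation convergence theorem for MINRES. Throughout, fix $\Br\in\BV_h$ and set $T:=\mathcal{B}_h(\Br)\mathcal{A}_h(\Br):X_h\to X_h$.

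First I would record the operator-theoretic properties that make MINRES applicable. By construction, $\mathcal{B}_h(\Br)$ is the Riesz isomorphism from $X_h^*$ onto $X_h$ induced by the inner product $(\cdot,\cdot)_{X_r}$ of~\eqref{eq:inner}, hence symmetric positive definite; and $\mathcal{A}_h(\Br)$ is symmetric on $X_h$ (cf.\ the discussion following~\eqref{eq:Ar}). Therefore $T$ is self-adjoint with respect to $(\cdot,\cdot)_{X_r}$, so $\sigma(T)\subset\bbR$. Moreover, since $\mathcal{B}_h(\Br)$ is an isometry between $X_h^*$ (with its dual norm) and $X_h$ (with $\|\cdot\|_{X_r}$), the identity $\|Ty\|_{X_r}=\|\mathcal{A}_h(\Br)y\|_{X_r^*}$ together with~\eqref{eq:A_cons} yields $c\le|\lambda|\le C$ for every eigenvalue $\lambda$ of $T$.

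Next I would rewrite both sides of the claimed inequality in terms of $(\cdot,\cdot)_{X_r}$. Because $\mathcal{B}_h(\Br)$ is the Riesz map, for any $e\in X_h$ one has
\begin{equation*}
\bigl\langle\mathcal{B}_h(\Br)\mathcal{A}_h(\Br)e,\mathcal{A}_h(\Br)e\bigr\rangle=(Te,Te)_{X_r}=\|Te\|_{X_r}^{2},
\end{equation*}
so, writing $e^k=x-x^k$, the target reduces to $\|Te^m\|_{X_r}\le 2\delta^m\|Te^0\|_{X_r}$. I would then invoke the standard MINRES residual-minimization property in the $(\cdot,\cdot)_{X_r}$-inner product (see~\cite{Mardal2011}): the iterates $x^m$ minimize $\|T(x-y)\|_{X_r}$ over $y\in x^0+K_m(T,Te^0)$, whence
\begin{equation*}
\|Te^m\|_{X_r}\le\inf_{\substack{p\in P_m\\p(0)=1}}\max_{\lambda\in\sigma(T)}|p(\lambda)|\cdot\|Te^0\|_{X_r}.
\end{equation*}
Inserting the Chebyshev polynomial scaled to a real interval containing $\sigma(T)$ delivers the bound $2\delta^m$ with $\delta=(\kappa(T)-1)/(\kappa(T)+1)$, and Theorem~\ref{thm:condexact} supplies $\kappa(T)\le C/c$, from which $\delta\le(C-c)/(C+c)$ is immediate.

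The only step requiring genuine care is the Chebyshev polynomial estimate on $\sigma(T)$, which I would simply quote from standard references since $T$ is symmetric indefinite with spectrum bounded away from the origin; everything else is structural, using only the symmetry of $\mathcal{A}_h(\Br)$, the fact that $\mathcal{B}_h(\Br)$ is an isometric Riesz map, and the uniform eigenvalue bounds extracted from Theorem~\ref{thm:condexact}.
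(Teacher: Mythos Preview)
Your proposal is correct and follows essentially the same route as the paper: both observe that $\mathcal{B}_h(\Br)\mathcal{A}_h(\Br)$ is self-adjoint in the inner product $\langle\mathcal{B}_h(\Br)^{-1}\cdot,\cdot\rangle=(\cdot,\cdot)_{X_r}$ and then invoke the standard MINRES convergence theory together with the condition-number bound of Theorem~\ref{thm:condexact}. The only difference is presentational---the paper simply cites Theorem~2.2 of \cite{Mardal2011}, whereas you spell out the residual-minimization and Chebyshev-polynomial steps explicitly.
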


\begin{proof} Since the operator $\mathcal{B}_{h}(\Br)\in \mathcal{L}(X^*_h,X_h)$ is symmetric and positive definite, $\left\langle \left(\mathcal{B}_{h}(\Br)\right)^{-1} \cdot, \cdot\right\rangle$ defines an inner product on $X_h$. Furthermore, it is easy to see that the preconditioned operator $\mathcal{B}_{h}(\Br)\mathcal{A}_{h}(\Br)\in \mathcal{L}(X_h,X_h)$ is symmetric in this inner product.
So we shall use MINRES method which is defined with respect to the inner product  $\left\langle \left(\mathcal{B}_{h}(\Br)\right)^{-1} \cdot, \cdot\right\rangle$. The estimate is obtained by applying Theorem 2.2 in \cite{Mardal2011}.
\end{proof}

The inverse of $S_{\Br,h}$ needs to solve an elliptic  linear problem. It could be costly to solve this elliptic problem, especially for three dimensional prolems:
%
 Thus, we consider the following
inexact preconditioner
\begin{equation}
\widetilde{\mathcal{B}}_{h}(\Br)=\left(\begin{array}{ccc}
\alpha H({\Br}) & 0 & 0\\
0 & \widetilde{S}_{\Br,h}^{-1} & 0\\
0 & 0 & \alpha^{-1}H({\Br})^{-1}
\end{array}\right)^{-1}.\label{eq:inexact}
\end{equation}
Here, $\widetilde{S}_{\Br,h}$ is an operator that is spectrally equivalent to the action of the inverse
of the block $S_{\Br,h}$ in the following sense:
\begin{equation}
c_{1,s}\langle S_{\Br,h}^{-1}v_h,v_h\rangle\leq\langle\widetilde{S}_{\Br,h}v_h,v_h\rangle\leq c_{2,s}\langle S_{\Br,h}^{-1}v_h,v_h\rangle\quad \forall v_h\in V_h,\label{eq:spec}
\end{equation}
where the constants $c_{1,s}$ and $c_{2,s}$ are independent of the
mesh size, the penalization parameter, the regularization parameter
and the iterative step. In the implementation, $\widetilde{S}_{\Br,h}$ is obtained
by using standard multigrid method, algebraic multigrid method (AMG) or domain
decomposition method. Based on the results in \cite{Briggs2000,Hackbusch2016,Xu2017}, we think the estimate \eqref{eq:spec} will hold if we use AMG to produce
$\tilde S_{\Br,h}$, and our experiments seem to confirm this. Thus, the proposed preconditioner is very cheap and easy-to-implement.

When using $\widetilde{\mathcal{B}}_{h}(\Br)$ as the preconditioner, we have the following results regarding the convergence estimate
of the corresponding preconditioned MINRES method.
\begin{thm}
\noindent Assume that \eqref{eq:spec} is satisfied, then we have
\[
1\le\kappa\left(\widetilde{\mathcal{B}}_{h}({\Br})\mathcal{A}_{h}({\Br})\right)\le\frac{C\hat{c}_{2}}{c\hat{c}_{1}},
\]

\noindent where $\hat{c}_{1}=\min\left(c_{1,s},1\right)$ and $\hat{c}_{2}=\max\left(c_{2,s},1\right)$.
Moreover, if $x^{0}$ is the the initial value, $x^{m}$ is the $m$-th
iteration of MINRES method and $x$ is the exact solution, then there
exists a constant $\delta\in(0,1)$, only depending on the condition
number $\kappa\left(\widetilde{\mathcal{B}}_{h}({\Br})\mathcal{A}_{h}({\Br})\right)$,
such that
\[
\left\langle \widetilde{\mathcal{B}}_{h}({\Br})\mathcal{A}_{h}({\Br})\left(x-x^{m}\right),\mathcal{A}_{h}(\Br)\left(x-x^{m}\right)\right\rangle ^{\frac{1}{2}}\le2\delta^{m}\left\langle \widetilde{\mathcal{B}}_{h}({\Br})\mathcal{A}_{h}({\Br})\left(x-x^{0}\right),\mathcal{A}_{h}(\Br)\left(x-x^{0}\right)\right\rangle ^{\frac{1}{2}}.
\]
 Furthermore, an estimate leads to
\[
\delta=\frac{\kappa\left(\widetilde{\mathcal{B}}_{h}({\Br})\mathcal{A}_{h}({\Br})\right)-1}{\kappa\left(\widetilde{\mathcal{B}}_{h}({\Br})\mathcal{A}_{h}({\Br})\right)+1}\le\frac{\hat{c}_{2}C-\hat{c}_{1}c}{\hat{c}_{2}C+\hat{c}_{1}c}.
\]
\end{thm}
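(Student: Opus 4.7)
The plan is to reduce the analysis to the exact-preconditioner case handled in Theorems~\ref{thm:condexact} and \ref{thm:conexact} by lifting the middle-block spectral equivalence \eqref{eq:spec} to a full equivalence between $\widetilde{\mathcal{B}}_h(\Br)$ and $\mathcal{B}_h(\Br)$, and then paying the cost of that equivalence in the condition number.

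First, I would combine \eqref{eq:spec} with the trivial (constant-one) equivalence on the first and third blocks, which coincide in the two diagonal operators \eqref{eq:Br} and \eqref{eq:inexact}, to obtain
\[
\hat c_1\langle\mathcal{B}_h(\Br)x^*,x^*\rangle\leq\langle\widetilde{\mathcal{B}}_h(\Br)x^*,x^*\rangle\leq\hat c_2\langle\mathcal{B}_h(\Br)x^*,x^*\rangle,\quad\forall x^*\in X_h^*.
\]
Inverting these positive-definite quadratic forms yields, on $X_h$,
\[
\hat c_2^{-1}\langle\mathcal{B}_h(\Br)^{-1}x,x\rangle\leq\langle\widetilde{\mathcal{B}}_h(\Br)^{-1}x,x\rangle\leq\hat c_1^{-1}\langle\mathcal{B}_h(\Br)^{-1}x,x\rangle,
\]
so the norm $\|x\|_{\widetilde X_r}^2\coloneqq\langle\widetilde{\mathcal{B}}_h(\Br)^{-1}x,x\rangle$ is equivalent to $\|x\|_{X_r}^2$ with constants $\hat c_2^{-1}$ and $\hat c_1^{-1}$.

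Second, equipping $X_h$ with $\|\cdot\|_{\widetilde X_r}$ and $X_h^*$ with the corresponding dual norm, I would bound the preconditioned operator and its inverse by a short calculation: for any $x\in X_h$,
\[
\|\widetilde{\mathcal{B}}_h(\Br)\mathcal{A}_h(\Br)x\|_{\widetilde X_r}^2=\langle\mathcal{A}_h(\Br)x,\widetilde{\mathcal{B}}_h(\Br)\mathcal{A}_h(\Br)x\rangle\leq\hat c_2\|\mathcal{B}_h(\Br)\mathcal{A}_h(\Br)x\|_{X_r}^2,
\]
which together with $\|x\|_{X_r}\leq\hat c_2^{1/2}\|x\|_{\widetilde X_r}$ and \eqref{eq:A_cons} gives $\|\widetilde{\mathcal{B}}_h(\Br)\mathcal{A}_h(\Br)\|_{\mathcal{L}(X_h,X_h)}\leq\hat c_2 C$. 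An analogous computation using the lower bound of the equivalence yields $\|(\widetilde{\mathcal{B}}_h(\Br)\mathcal{A}_h(\Br))^{-1}\|_{\mathcal{L}(X_h,X_h)}\leq(\hat c_1 c)^{-1}$. Multiplying these two bounds via the formula \eqref{eq:kappa} produces the claimed upper bound $\kappa(\widetilde{\mathcal{B}}_h(\Br)\mathcal{A}_h(\Br))\leq C\hat c_2/(c\hat c_1)$; the lower bound of one is automatic.

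Finally, since $\widetilde{\mathcal{B}}_h(\Br)$ is symmetric positive definite and $\mathcal{A}_h(\Br)$ is symmetric, $\widetilde{\mathcal{B}}_h(\Br)\mathcal{A}_h(\Br)$ is symmetric with respect to the inner product $\langle\widetilde{\mathcal{B}}_h(\Br)^{-1}\cdot,\cdot\rangle$, and MINRES can be run in this inner product as in the proof of Theorem~\ref{thm:conexact}. Applying Theorem~2.2 of \cite{Mardal2011} then yields the geometric convergence estimate with rate $\delta=(\kappa-1)/(\kappa+1)$, and monotonicity of the map $\kappa\mapsto(\kappa-1)/(\kappa+1)$ combined with the Step~2 bound on $\kappa$ gives $\delta\leq(\hat c_2 C-\hat c_1 c)/(\hat c_2 C+\hat c_1 c)$. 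The only real obstacle is bookkeeping: one must track that \eqref{eq:spec} compares $\widetilde{S}_{\Br,h}$ with $S_{\Br,h}^{-1}$ (not with $S_{\Br,h}$ itself), so that after inverting block by block the constants $\hat c_1$ and $\hat c_2$ enter with the right exponents and in the right places.
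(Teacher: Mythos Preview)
Your proposal is correct and follows essentially the same route as the paper: lift the block spectral equivalence \eqref{eq:spec} to a full equivalence between $\widetilde{\mathcal{B}}_h(\Br)$ and $\mathcal{B}_h(\Br)$ with constants $\hat c_1,\hat c_2$, pass the resulting factors through the bounds \eqref{eq:A_cons} for the exact preconditioner, and then invoke the MINRES estimate from \cite{Mardal2011}. The only cosmetic difference is that the paper stays in the $\|\cdot\|_{X_r}$-norm and bounds the factored operator $\widetilde{\mathcal{B}}_h(\Br)\mathcal{B}_h(\Br)^{-1}\cdot\mathcal{B}_h(\Br)\mathcal{A}_h(\Br)$ by submultiplicativity, whereas you switch to $\|\cdot\|_{\widetilde X_r}$ and estimate directly; both computations give the identical constants $\hat c_2 C$ and $(\hat c_1 c)^{-1}$.
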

\begin{proof}
We only need to estimate the condition number of the preconditioned
operator $\widetilde{\mathcal{B}}_{h}({\Br})\mathcal{A}_{h}({\Br})$, the
result of the convergence of preconditioned MINRES method is obvious. From
\eqref{eq:spec}, we can see that $\widetilde{\mathcal{B}}_{h}({\Br})$
satisfies
\begin{equation}
\hat{c}_{1}\left(x,x\right)_{\mathcal{B}_{h}(\Br)}\le(x,x)_{\widetilde{\mathcal{B}}_{h}(\Br)}\le\hat{c}_{2}\left(x,x\right)_{\mathcal{B}_{h}(\Br)}.\label{eq:ineq}
\end{equation}
This implies that
\[
\left\Vert \widetilde{\mathcal{B}}_{h}(\Br)\left(\mathcal{B}_{h}(\Br)\right)^{-1}\right\Vert {}_{\mathcal{L}\left(X_{h},X_{h}\right)}\le\hat{c}_{2},\quad\left\Vert \left(\widetilde{\mathcal{B}}_{h}(\Br)\left(\mathcal{B}_{h}(\Br)\right)^{-1}\right)^{-1}\right\Vert {}_{\mathcal{L}\left(X_{h},X_{h}\right)}\le\hat{c}_{1}^{-1}.
\]
Similar to the proof of Theorem \ref{thm:condexact}, we have
\begin{align*}
 & \left\Vert \widetilde{\mathcal{B}}_{h}(\Br)\mathcal{A}_{h}(\Br)\right\Vert {}_{\mathcal{L}\left(X_{h},X_{h}\right)}=\left\Vert \widetilde{\mathcal{B}}_{h}(\Br)\left(\mathcal{B}_{h}(\Br)\right)^{-1}\mathcal{B}_{h}(\Br)\mathcal{A}_{h}(\Br)\right\Vert {}_{\mathcal{L}\left(X_{h},X_{h}\right)}\\
 & \quad\le\left\Vert \widetilde{\mathcal{B}}_{h}(\Br)\left(\mathcal{B}_{h}(\Br)\right)^{-1}\right\Vert {}_{\mathcal{L}\left(X_{h},X_{h}\right)}\|\mathcal{B}_{h}(\Br)\mathcal{A}_{h}(\Br)\|_{\mathcal{L}\left(X_{h},X_{h}\right)}\leq C\hat{c}_{2},
\end{align*}
 and
\begin{align*}
 & \left\Vert \left(\widetilde{\mathcal{B}}_{h}(\Br)\mathcal{A}_{h}(\Br)\right)^{-1}\right\Vert _{\mathcal{L}\left(X_{h},X_{h}\right)}=\left\Vert \left(\widetilde{\mathcal{B}}_{h}(\Br)\left(\mathcal{B}_{h}(\Br)\right)^{-1}\mathcal{B}_{h}(\Br)\mathcal{A}_{h}(\Br)\right)^{-1}\right\Vert {}_{\mathcal{L}\left(X_{h},X_{h}\right)}\\
 & \quad\leq\left\Vert \left(\mathcal{B}_{h}(\Br)\mathcal{A}_{h}(\Br)\right)^{-1}\right\Vert _{\mathcal{L}\left(X_{h},X_{h}\right)}\left\Vert \left(\widetilde{\mathcal{B}}_{h}(\Br)\left(\mathcal{B}_{h}(\Br)\right)^{-1}\right)^{-1}\right\Vert {}_{\mathcal{L}\left(X_{h},X_{h}\right)}\leq c^{-1}\hat{c}_{1}^{-1}.
\end{align*}
Therefore, we get the estimate of the condition number.
\end{proof}

As a consequence, the exact methods for the block solvers in the preconditioner $\mathcal{B}_{h}(\Br)$
can be replaced by inexact methods and still maintain the desired
properties. So far, we have gotten a robust and effective solver when solving the linearized systems. This makes it very cheap
to solve the system for each iteration for Newton updating. Together,
it yields a second-order Newton scheme with robust and optimal preconditioners.

\section{Numerical experiments\label{sec:num}}

In this section, we present some numerical experiments to verify the
convergence rate of the finite element approximation to  our model, and to demonstrate the robustness of the preconditioners. A set of
two-dimensional examples are reported below. All codes were
written in MATLAB based on the open-source finite element library
$i$FEM \cite{Chen2008}.

For comparison, we also implement the following fixed point method, which is
also known as Picard method. For this method, given the $n$-th iteration
$\left(\Bp_{h}^{n},u_{h}^{n},\lambdabf_{h}^{n}\right)\in\BV_{h}\times U_{h}\times\BW_{h}$,
$\left(\Bp_{h}^{n+1},u_{h}^{n+1},\lambdabf_{h}^{n+1}\right)\in\BV_{h}\times U_{h}\times\BW_{h}$
is solved by
\begin{equation}
	\begin{cases}
		\langle\alpha\Bp_{h}^{n+1}/\left|\Bp_{h}^{n}\right|_{\beta},\Bq_{h}\rangle-\langle\mathbf{\lambdabf}_{h}^{n+1},\Bq_{h}\rangle & =0\quad\forall\Bq_{h}\in\BV_{h},\\
		\langle u_{h}^{n+1},v_{h}\rangle+\langle\mathbf{\lambdabf}_{h}^{n+1},\nabla v_{h}\rangle-\langle f,v_{h}\rangle & =0\quad\forall v_{h}\in U_{h},\\
		-\langle\Bp_{h}^{n+1},\mubf_{h}\rangle+\langle\nabla u_{h}^{n+1},\mubf_{h}\rangle & =0\quad\forall\mubf_{h}\in\BW_{h}.
	\end{cases}\label{eq:weakhpiacrd}
\end{equation}
With the notation in Section \ref{sec:fem},
its residual form reads: Find $\left(\delta\Bp^n_{h},\delta u^n_{h},\delta\lambdabf^n_{h}\right)\in\BV_{h}\times U_{h}\times\BW_{h}$
such that for any $(\Bq_{h},v_{h},\mubf_{h})\in\BV_{h}\times U_{h}\times\BW_{h}$,
\begin{equation}
	\begin{cases}
		\langle\alpha\widehat{H}(\Bp_{h}^{n})\delta\Bp^n_{h},\Bq_{h}\rangle-\langle\delta\lambdabf^n_{h},\Bq_{h}\rangle & =R^n_{\Bp}\left(\Bq_{h}\right),\\
		\langle\delta u^n_{h},v_{h}\rangle+\langle\delta\lambdabf^n_{h},\nabla v_{h}\rangle & =R^n_{u}\left(v_{h}\right),\\
		-\langle\delta\Bp^n_{h},\mubf_{h}\rangle+\langle\nabla\delta u^n_{h},\mubf_{h}\rangle & =R^n_{\lambdabf}\left(\mubf_{h}\right),
	\end{cases}\label{eq:weakhpicardres}
\end{equation}
where $\widehat{H}(\Bp_{h}^{n})\coloneqq1/\left|\Bp_{h}^{n}\right|_{\beta}$.
Thus, the new solution $\left(\Bp_{h}^{n+1},u_{h}^{n+1},\lambdabf_{h}^{n+1}\right)$
is given by
\begin{equation}
	\left(\Bp_{h}^{n+1},u_{h}^{n+1},\lambdabf_{h}^{n+1}\right)=\left(\Bp_{h}^{n},u_{h}^{n},\lambdabf_{h}^{n}\right)+\left(\delta\Bp^n_{h},\delta u^n_{h},\delta\lambdabf^n_{h}\right).\label{eq:updatehp}
\end{equation}
Clearly, the difference of Newton method and Picard method lies on
the matrix $H(\Br)$ and the scalar $\widehat{H}(\Br)$. Note that $\widehat{H}(\Br)$ is bounded above by $1/\beta$ and below by zero. Hence, the theory for Newton method in Sections \ref{sec:wellposed}-\ref{sec:precond} also applies to the Picard method as long as we replace $H(\Br)$ by $\widehat{H}(\Br)$ in $\mathcal{A}_{h}(\Br)$, $\mathcal{B}_{h}(\Br)$ and $\widetilde{\mathcal{B}}_{h}(\Br)$. For convenience,
we still use the notation $\mathcal{A}_{h}(\Br)$, $\mathcal{B}_{h}(\Br)$ and $\widetilde{\mathcal{B}}_{h}(\Br)$ for Picard method. From \eqref{eq:weakhnewton}, \eqref{eq:weakhpicardres}, \eqref{eq:Br} and \eqref{eq:inexact}, we know that the costs of per iteration for Picard method and Newton method are almost the same. Thus, for convenience, only the iteration numbers needed by both methods are used for comparison in this section.

\subsection{Implementation of block preconditioners}

First of all, we discuss some implementation details of the proposed
block preconditioners. To solve the linear system obtained from the
finite element discretization, we use the MINRES method as an outer
iterative solver, with the tolerance for the relative residual in
the energy norm set to $\varepsilon=10^{-10}$. The block preconditioners designed
in Section \ref{sec:precond} are used to accelerate the convergence
rate of MINRES. In the following, we implement both exact and inexact
inner solvers, $\mathcal{B}_{h}(\Br)$ and $\widetilde{\mathcal{B}}_{h}(\Br)$.
As we know, inverting the proposed block preconditioners ends up with
inverting diagonal blocks. Therefore, the main difference in implementation
is how to invert the second diagonal block. For the exact preconditioner
$\mathcal{B}_{h}(\Br)$ , we call direct solvers implemented in
MATLAB. While for the inexact preconditioner $\widetilde{\mathcal{B}}_{h}(\Br)$,
we mainly call AMG preconditioned conjugate gradient (PCG) method to
define the operator $\widetilde{S}_{\Br,h}$. The tolerance of PCG in terms of $l_{2}$\textendash norm
of the relative residual is $\varepsilon_{0}=10^{-3}$. The AMG we used is the classical algebraic multigrid method with Ruge-Stuben coarsening and standard interpolation \cite{Chen2008,Xu2017}.

Without specifications, the initial guess $\left(\Bp_{h}^{0},u_{h}^{0},\lambdabf_{h}^{0}\right)$
is taken to be the zero solution and the relative tolerances are set
by $10^{-6}$ for the nonlinear iteration. Here the maximal iteration
number of the MINRES solver is set by $N=200$. For the sake of convenience,
we denote $N_{{\rm Newton}}$ by the number of Newton\textquoteright s
iterations,  $N_{{\rm Picard}}$ by the number of Picard\textquoteright s
iterations and $N_{{\rm MINRES}}$ by the average number of preconditioned
MINRES iterations for solving the linearized problem.

To ensure the
global convergence of the Newton algorithm, we introduce an additional
damping parameter $\theta$. Let $\boldsymbol{b}^{n}$ be the vector
representation of the right hand of \eqref{eq:weakh} at $n$-th iteration,
we use the classical backtracking line search method, which selects as step length $\theta^n$ to be the first number in the sequence of $\left\{ 1/2^{k}\right\} _{k=0}^{\infty}$
that satisfies the following criterion
\[
\left\Vert \boldsymbol{b}^{k+1}\right\Vert _{l^{2}}\le\left(1-\sigma\theta_{k}\right)\left\Vert \boldsymbol{b}^{n}\right\Vert _{l^{2}},
\]
where $\sigma$ is chosen as $10^{-4}$. Thus, the damped Newton-updating
is given by
\begin{equation}
	\left(\Bp_{h}^{n+1},u_{h}^{n+1},\lambdabf_{h}^{n+1}\right)=\left(\Bp_{h}^{n},u_{h}^{n},\lambdabf_{h}^{n}\right)+\theta^n\left(\delta\Bp_{h}^n,\delta u_{h}^n,\delta\lambdabf_{h}^n\right).\label{eq:NewtonupdateDamped}
\end{equation}

\subsection{Numerical results} This subsection is to report on some numerical experiments.
\begin{example}
\label{exa:smooth}This example is to test the convergence rate of
finite element solutions and the robustness of the preconditioners
for some smooth problems. The computational domain $\Omega$ is set as $(0,1)^{2}$.
The function $f$ is chosen so that the exact solutions are given
by
\begin{align*}
u & =\cos\left(\pi x\right)\cos\left(\pi y\right),\\
\Bp & =-\pi\left(\sin\left(\pi x\right)\cos\left(\pi y\right),\cos\left(\pi x\right)\sin\left(\pi y\right)\right)^{\top},\\
\boldsymbol{\lambdabf} & =-\frac{\pi\alpha}{\left|\Bp\right|_{\beta}}\left(\sin\left(\pi x\right)\cos\left(\pi y\right),\cos\left(\pi x\right)\sin\left(\pi y\right)\right)^{\top}.
\end{align*}
\end{example}

We first carry out the numerical experiment with $\alpha=\beta=1$. Table
\ref{tab:DOFs2D} shows the information for the meshes and the number
of degrees of freedom which we are using. Based on the results
shown in Table \ref{tab:Order2D}, we find that the convergence
rates for ($\Bp_{h},u_{h},\lambdabf_{h}$) are given by
\[
\begin{array}{ll}
||\Bp-\Bp_{h}||_{0}\sim\mathcal{O}\left(h\right),\quad & ||\lambdabf-\lambdabf_{h}||_{0}\sim\mathcal{O}\left(h\right),\\
||u-u_{h}||_{1}\sim\mathcal{O}\left(h\right),\quad & ||u-u_{h}||_{0}\sim\mathcal{O}\left(h^{2}\right).
\end{array}
\]
Remember that we are using the piecewise constant finite elements
for discretizing $\Bp$ and $\lambdabf$, the first-order Lagrange
finite elements for discretizing $u$. This means that expected optimal
convergence rates are obtained for all variables.

\begin{table}
\caption{The mesh sizes and the numbers of DOFs.\label{tab:DOFs2D}}

\centering{}%
\begin{tabular}{|c|c|c|c|c|c|}
\hline
Mesh & $h$ & DOFs for $\Bp_{h}$ & DOFs for $u_{h}$ & DOFs for $\lambdabf_{h}$ & Total DOFs\\
\hline
$\mathcal{T}_{1}$ & 6.25e-02 & 1,024 & 289 & 1,024 & 2,337\\
\hline
$\mathcal{T}_{2}$ & 3.13e-02 & 4,096 & 1,089 & 4,096 & 9,281\\
\hline
$\mathcal{T}_{3}$ & 1.56e-02 & 16,384 & 4,225 & 16,384 & 36,993\\
\hline
$\mathcal{T}_{4}$ & 7.81e-03 & 65,536 & 16,641 & 65,536 & 147,713\\
\hline
\end{tabular}
\end{table}

\begin{table}
\begin{centering}
\caption{Errors and convergence rates for $\left(\Bp_{h},u_{h},\lambdabf_{h}\right)$
(Example \ref{exa:smooth}).\label{tab:Order2D}}
\par\end{centering}
\begin{centering}
\begin{tabular}{|c|c|c|c|c|}
\hline
$h$ & $||\boldsymbol{p}-\boldsymbol{p}_{h}||_{0}$ & Order & $||\boldsymbol{\lambdabf}-\boldsymbol{\lambdabf}_{h}||_{0}$ & Order\\
\hline
6.25e-02 & 2.17585e-01 & \textemdash{} & 8.95410e-02 & \textemdash{}\\
\hline
3.13e-02 & 1.08967e-01 & 1.00 & 4.52978e-02 & 0.98\\
\hline
1.56e-02 & 5.45105e-02 & 1.00 & 2.27351e-02 & 1.00\\
\hline
7.81e-03 & 2.72596e-02 & 1.00 & 1.13809e-02 & 1.00\\
\hline
\end{tabular}
\par\end{centering}
\medskip{}

\centering{}%
\begin{tabular}{|c|c|c|c|c|}
\hline
$h$ & $||u-u_{h}||_{1}$ & Order & $||u-u_{h}||_{0}$ & Order\\
\hline
6.25e-02 & 2.17595e-01 & \textemdash{} & 7.97886e-03 & \textemdash{}\\
\hline
3.13e-02 & 1.08968e-01 & 1.00 & 2.02665e-03 & 1.98\\
\hline
1.56e-02 & 5.45107e-02 & 1.00 & 5.12786e-04 & 1.98\\
\hline
7.81e-03 & 2.72596e-02 & 1.00 & 1.32618e-04 & 1.95\\
\hline
\end{tabular}
\end{table}

Table \ref{tab:meshrobust2D} shows iteration counts for Newton method with the block preconditioners $\mathcal{B}_{h}(\Br)$ and $\widetilde{\mathcal{B}}_{h}(\Br)$
for different mesh sizes. We see from the relatively consistent iteration
counts that both the exact and inexact preconditioners are robust
with respect to the mesh size. This demonstrates the optimality of
the linear solver and the efficiency of the preconditioners. Compared
with using the exact block preconditioners, using the inexact one
results in a slight degradation in performance, but nothing significant and negligible.

For comparisons, we present the corresponding results for Picard method in Table \ref{tab:meshrobust2DPicard}. Observe that the linear iteration numbers of MINRES method for these two methods are almost the same but the nonlinear iteration numbers of Newton method are much less than those of Picard method. Using mesh $\mathcal{T}_4$, we display the convergence histories of Picard method and Newton method in Figure \ref{fig:ResSmooth}. We see that Newton method shows a dramatic faster convergence than the Picard method.

\begin{table}
\centering{}\caption{Iteration counts for Newton method with the block preconditioners $\mathcal{B}_{h}(\Br)$
and $\widetilde{\mathcal{B}}_{h}(\Br)$ (Example \ref{exa:smooth}).\label{tab:meshrobust2D}}
\begin{tabular}{|c|c|c|}
\hline
\multirow{2}{*}{mesh} & \multicolumn{1}{c|}{$\mathcal{B}_{h}(\Br)$} & \multicolumn{1}{c|}{$\widetilde{\mathcal{B}}_{h}(\Br)$}\\
\cline{2-3} \cline{3-3}
 & $N_{{\rm Newton}}(N_{{\rm MINRES}})$ & $N_{{\rm Newton}}(N_{{\rm MINRES}})$\\
\hline
$\mathcal{T}_{1}$ & 5(21) & 5(26)\\
\hline
$\mathcal{T}_{2}$ & 5(20) & 5(25)\\
\hline
$\mathcal{T}_{3}$ & 5(20) & 5(25)\\
\hline
$\mathcal{T}_{4}$ & 5(19) & 5(25)\\
\hline
\end{tabular}
\end{table}

\begin{table}
	\centering{}\caption{Iteration counts for Picard method with the block preconditioners $\mathcal{B}_{h}(\Br)$
		and $\widetilde{\mathcal{B}}_{h}(\Br)$ (Example \ref{exa:smooth}).\label{tab:meshrobust2DPicard}}
	\begin{tabular}{|c|c|c|}
		\hline
		\multirow{2}{*}{mesh} & \multicolumn{1}{c|}{$\mathcal{B}_{h}(\Br)$} & \multicolumn{1}{c|}{$\widetilde{\mathcal{B}}_{h}(\Br)$}\\
		\cline{2-3} \cline{3-3}
		& $N_{{\rm Picard}}(N_{{\rm MINRES}})$ & $N_{{\rm Picard}}(N_{{\rm MINRES}})$\\
		\hline
		$\mathcal{T}_{1}$ & 36(19) & 36(24)\\
		\hline
		$\mathcal{T}_{2}$ & 33(19) & 33(24)\\
		\hline
		$\mathcal{T}_{3}$ & 30(18) & 30(24)\\
		\hline
		$\mathcal{T}_{4}$ & 26(18) & 26(23)\\
		\hline
	\end{tabular}
\end{table}

\begin{figure}
	\begin{centering}
		\includegraphics[scale=0.4]{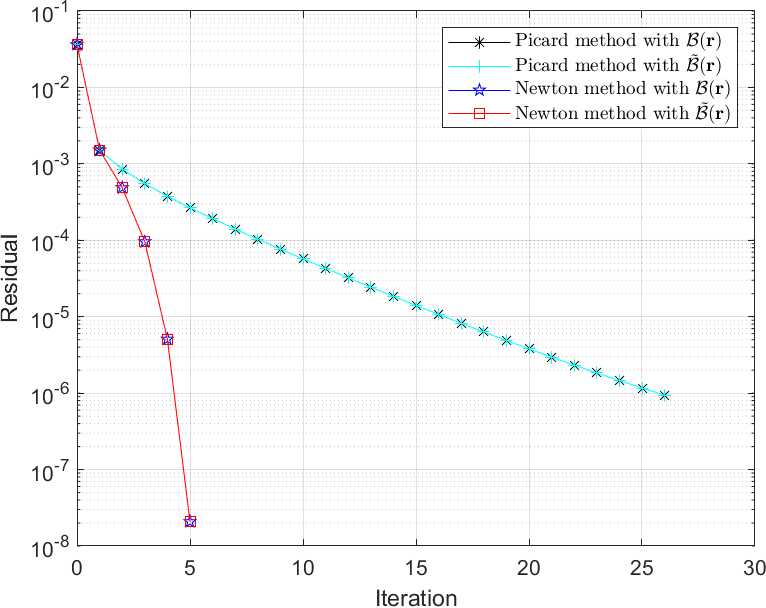}
		\par\end{centering}
	\caption{Convergence histories of Picard method and Newton method (Example \ref{exa:smooth}).\label{fig:ResSmooth}}
\end{figure}

In Table \ref{tab:iterstep2D}, we further give the iteration numbers
of MINRES at each Newton step for the exact and inexact preconditioners.
From the results, we see that the iteration numbers of MINRES is almost invariant with different mesh sizes and iteration numbers. This verifies that our preconditioners
are robust with respect to the iterative steps and mesh sizes. Besides, we again observe that the use of the inexact preconditioners has nearly no impact on the needed number of iterations for MINRES. Using the grid $\mathcal{T}_{4}$, we plot the
convergence histories of MINRES method at each Newton step in Figure \ref{fig:history2D}. It can be seen that the relative residual decrease
rapidly as we expected, which indicates our preconditioners are effective.

\begin{table}[H]
\centering{}\caption{Number of MINRES iterations with the block preconditioners $\mathcal{B}_{h}(\Br)$
and $\widetilde{\mathcal{B}}_{h}(\Br)$ at each Newton step (Example
\ref{exa:smooth}).\label{tab:iterstep2D}}
\begin{tabular}{|c|c|c|c|c|c|c|c|c|c|c|c|}
\hline
\multirow{2}{*}{$h$} & \multicolumn{5}{c|}{$\mathcal{B}_{h}(\Br)$} & \multirow{6}{*}{} & \multicolumn{5}{c|}{$\widetilde{\mathcal{B}}_{h}(\Br)$}\\
\cline{2-6} \cline{3-6} \cline{4-6} \cline{5-6} \cline{6-6} \cline{8-12} \cline{9-12} \cline{10-12} \cline{11-12} \cline{12-12}
 & 1 & 2 & 3 & 4 & 5 &  & 1 & 2 & 3 & 4 & 5\\
\cline{1-6} \cline{2-6} \cline{3-6} \cline{4-6} \cline{5-6} \cline{6-6} \cline{8-12} \cline{9-12} \cline{10-12} \cline{11-12} \cline{12-12}
$\mathcal{T}_{1}$ & 15 & 18 & 22 & 24 & 25 &  & 17 & 23 & 26 & 31 & 33\\
\cline{1-6} \cline{2-6} \cline{3-6} \cline{4-6} \cline{5-6} \cline{6-6} \cline{8-12} \cline{9-12} \cline{10-12} \cline{11-12} \cline{12-12}
$\mathcal{T}_{2}$ & 13 & 18 & 22 & 24 & 25 &  & 17 & 22 & 26 & 31 & 31\\
\cline{1-6} \cline{2-6} \cline{3-6} \cline{4-6} \cline{5-6} \cline{6-6} \cline{8-12} \cline{9-12} \cline{10-12} \cline{11-12} \cline{12-12}
$\mathcal{T}_{3}$ & 13 & 17 & 20 & 23 & 25 &  & 18 & 22 & 26 & 30 & 31\\
\cline{1-6} \cline{2-6} \cline{3-6} \cline{4-6} \cline{5-6} \cline{6-6} \cline{8-12} \cline{9-12} \cline{10-12} \cline{11-12} \cline{12-12}
$\mathcal{T}_{4}$ & 13 & 17 & 19 & 22 & 25 &  & 17 & 22 & 26 & 29 & 31\\
\hline
\end{tabular}
\end{table}

\begin{figure}
\begin{centering}
\begin{tabular}{cc}
\includegraphics[scale=0.35]{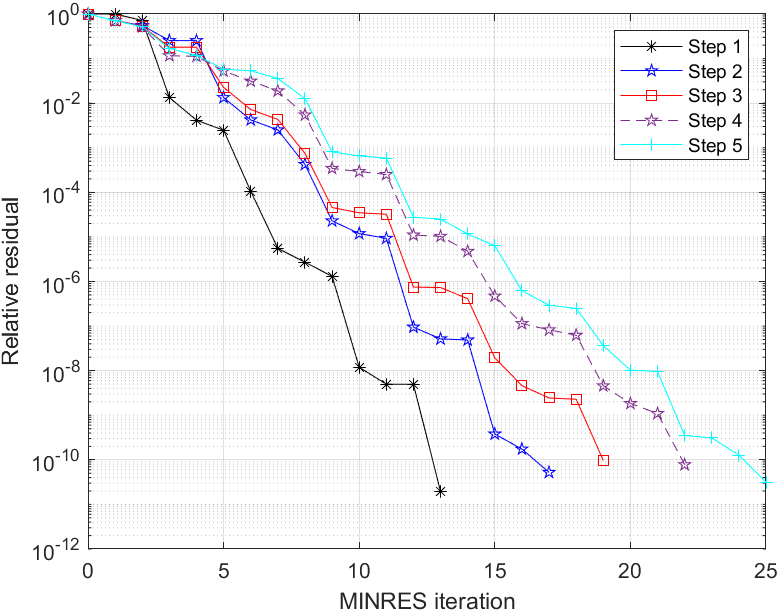} & \includegraphics[scale=0.35]{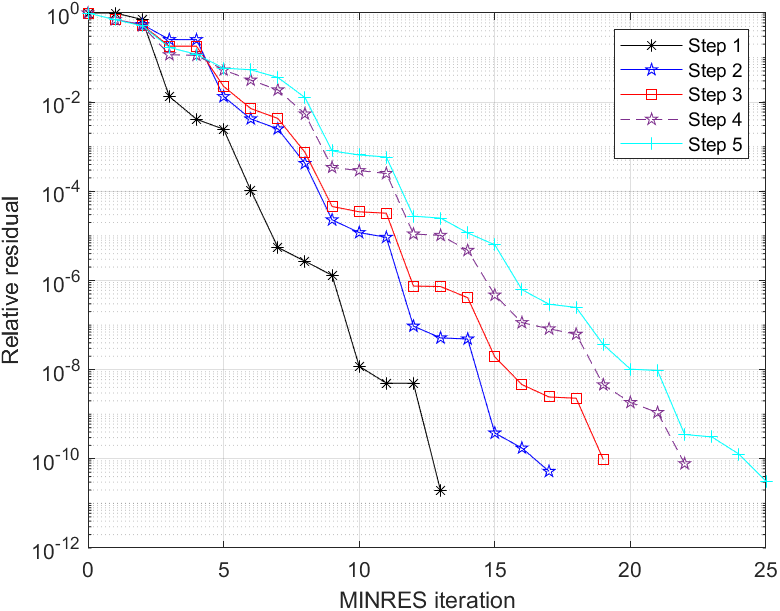}\\
\end{tabular}
\par\end{centering}
\caption{Convergence histories of the preconditioned MINRES method at each Newton
step with $\mathcal{B}_{h}(\Br)$(left) and $\widetilde{\mathcal{B}}_{h}(\Br)$(right).\label{fig:history2D}}
\end{figure}

Finally, we investigate the robustness of the block preconditioners
with respect to the parameters $\alpha$ and $\beta$. We fix the
mesh $\mathcal{T}_{3}$ and vary the parameters. The results for Newton method with the
exact and inexact preconditioners are shown in Table \ref{tab:VaryExact2D}-\ref{tab:VaryInexact2D}.
We can see that the proposed preconditioner
are very robust with respect to the parameters, and that the use of the inexact preconditioner has nearly no impact on the required iterations fro MINRES.

\begin{table}[H]
\begin{centering}
\caption{Iteration counts for Newton method with the block preconditioner $\mathcal{B}_{h}(\Br)$ (Example \ref{exa:smooth}).\label{tab:VaryExact2D}}
\begin{tabular}{|c|c|c|c|c|c|c|}
\hline
 & \multicolumn{6}{c|}{$\alpha$}\\
\hline
\multirow{4}{*}{$\beta$} & $N_{{\rm Newton}}(N_{{\rm MINRES}})$ & 1e5 & 1e3 & 1 & 1e-3 & 1e-5\\
\cline{2-7} \cline{3-7} \cline{4-7} \cline{5-7} \cline{6-7} \cline{7-7}
 & 1 & 7(8) & 7(10) & 5(20) & 2(41) & 1(21)\\
\cline{2-7} \cline{3-7} \cline{4-7} \cline{5-7} \cline{6-7} \cline{7-7}
 & 1e-3 & 14(10) & 13(13) & 8(31) & 2(28) & 2(28)\\
\cline{2-7} \cline{3-7} \cline{4-7} \cline{5-7} \cline{6-7} \cline{7-7}
 & 1e-5 & 13(9) & 13(10) & 10(30) & 2(22) & 3(29)\\
\hline
\end{tabular}
\par\end{centering}
\medskip{}

\centering{}\caption{Iteration counts for Newton method with the block preconditioner $\widetilde{\mathcal{B}}_{h}(\Br)$ (Example \ref{exa:smooth}).\label{tab:VaryInexact2D}}
\begin{tabular}{|c|c|c|c|c|c|c|}
\hline
 & \multicolumn{6}{c|}{$\alpha$}\\
\hline
\multirow{4}{*}{$\beta$} & $N_{{\rm Newton}}(N_{{\rm MINRES}})$ & 1e5 & 1e3 & 1 & 1e-3 & 1e-5\\
\cline{2-7} \cline{3-7} \cline{4-7} \cline{5-7} \cline{6-7} \cline{7-7}
 & 1 & 7(12) & 7(13) & 5(25) & 2(41) & 1(21)\\
\cline{2-7} \cline{3-7} \cline{4-7} \cline{5-7} \cline{6-7} \cline{7-7}
 & 1e-3 & 14(36) & 13(33) & 8(34) & 2(29) & 2(28)\\
\cline{2-7} \cline{3-7} \cline{4-7} \cline{5-7} \cline{6-7} \cline{7-7}
 & 1e-5 & 13(23) & 13(24) & 10(34) & 2(24) & 3(30)\\
\hline
\end{tabular}
\end{table}

\begin{example}
\label{exa:nonsmooth} In this example, we consider a non-smooth problem.
Let $\Omega=(0,1)^{2}$ and denote its center by $x_{\Omega}\coloneqq(0.5,0.5)$.
Write $B_{r}(x_{\Omega})=\left\{ x\in\mathbb{R}^{2}:||x-x_{\Omega}||_{l_{2}}<r\right\} $
with $r=1/3$, the function $f$ is chosen as a characteristic function
$f\coloneqq\chi_{B_{r}(x_{\Omega})}$. Let $I_{h}$ be the standard
nodal interpolation operator to $U_{h}$, the initial guess $u_{h}^{0}$
is taken as $u_{h}^{0}\coloneqq I_{h}f$, and $\left(p_{h}^{0},\lambdabf_{h}^{0}\right)$
is computed by the equations \eqref{eq:weakhfull}.
\end{example}

First, we aim to investigate the convergence rates. From \cite{Strong2003},
if $\alpha=0.02$, the exact solution to the model \eqref{eq:rofmodelu}
is given by
\[
u=\begin{cases}
1-\frac{2\alpha}{r}=0.94 & x\in B_{r}(x_{\Omega}),\\
\frac{2\pi r\alpha}{1-\pi r^{2}}\approx0.03 & x\in\Omega\backslash B_{r}(x_{\Omega}).
\end{cases}
\]
We perform numerical tests for the primal-dual finite element discretization
to \eqref{eq:ROFmodel} with $\beta=1{\rm e}-5$. The errors and convergence
rates for $u$ are displayed in Table \ref{tab:NonsmoothOrder2D}.
Note that the exact solution $u\in L^{\infty}\left(\Omega\right)$,
thus the convergence rates are not perfect as the ones in Example
\ref{exa:smooth}. Even so, the numerical results are in accord with the theoretical
results in Proposition 10.9 of \cite{Bartels2015a} for the standard
finite element discretization to \eqref{eq:rofpmodelu}. The error
estimate of primal-dual finite element discretization is left for
further work, we also refer to \cite{Chambolle2011,Chambolle2020,Chambolle2021,Lai2012,Wang2011} for some discussions on this direction.

\begin{table}[H]
\begin{centering}
\caption{Errors and convergence rates for $u_{h}$ (Example \ref{exa:nonsmooth}).\label{tab:NonsmoothOrder2D}}
\par\end{centering}
\centering{}%
\begin{tabular}{|c|c|c|c|c|}
\hline
$h$ & $||u-u_{h}||_{0}$ & Order & $||u-I_{h}u||_{0}$ & Order\\
\hline
6.25e-02 & 1.12395e-01 & \textemdash{} & 1.17827e-01 & \textemdash{}\\
\hline
3.13e-02 & 7.94646e-02 & 0.50 & 8.35176e-02 & 0.50\\
\hline
1.56e-02 & 6.10573e-02 & 0.38 & 6.33701e-02 & 0.40\\
\hline
7.81e-03 & 4.48697e-02 & 0.44 & 4.17839e-02 & 0.60\\
\hline
\end{tabular}
\end{table}

In Table \ref{tab:Nonsmoothmeshrobust2DNewton}, we present the iteration
numbers of Newton method on various meshes. As predicted from the analysis, the numbers
of MINRES iterations are stable when we vary the mesh size $h$.
Besides, the iteration numbers for $\widetilde{\mathcal{B}}_{h}(\Br)$ are only slightly larger than the numbers for $\mathcal{B}_{h}(\Br)$. This
is expected and the difference is by no means significant. Overall,
we can conclude that our preconditioners are effective and robust
with respect to the mesh size $h$. Figure \ref{fig:MINRES_Step} plots
the iteration numbers of MINRES at each Newton step. Again, we observe relative
robustness with respect to the iterative step. Specifically, Figure
\ref{fig:Damped} shows the damping parameter with respect to the
iterative step on mesh $\mathcal{T}_{4}$. We find that the damping
strategy is crucial to guarantee the global convergence of Newton's
method.

\begin{table}[H]
\centering{}\caption{Iteration counts for Newton method with the block preconditioners $\mathcal{B}_{h}(\Br)$
and $\widetilde{\mathcal{B}}_{h}(\Br)$ (Example \ref{exa:nonsmooth}).\label{tab:Nonsmoothmeshrobust2DNewton}}
\begin{tabular}{|c|c|c|}
\hline
\multirow{2}{*}{mesh} & \multicolumn{1}{c|}{$\mathcal{B}_{h}(\Br)$} & \multicolumn{1}{c|}{$\widetilde{\mathcal{B}}_{h}(\Br)$}\\
\cline{2-3} \cline{3-3}
 & $N_{{\rm Newton}}(N_{{\rm MINRES}})$ & $N_{{\rm Newton}}(N_{{\rm MINRES}})$\\
\hline
$\mathcal{T}_{1}$ & 21(35) & 21(35)\\
\hline
$\mathcal{T}_{2}$ & 23(32) & 23(32)\\
\hline
$\mathcal{T}_{3}$ & 40(31) & 34(31)\\
\hline
$\mathcal{T}_{4}$ & 21(29) & 24(31)\\
\hline
\end{tabular}
\end{table}

\begin{figure}
\begin{centering}
\begin{tabular}{cc}
\includegraphics[scale=0.4]{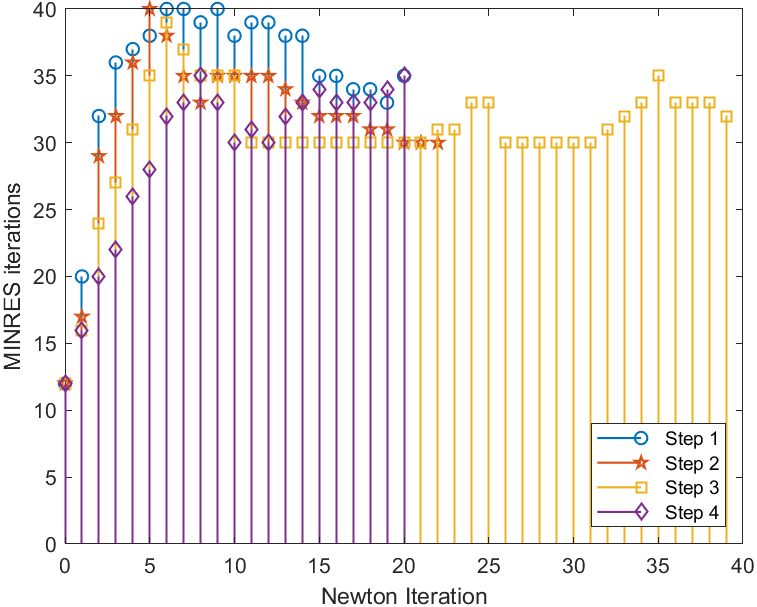} & \includegraphics[scale=0.4]{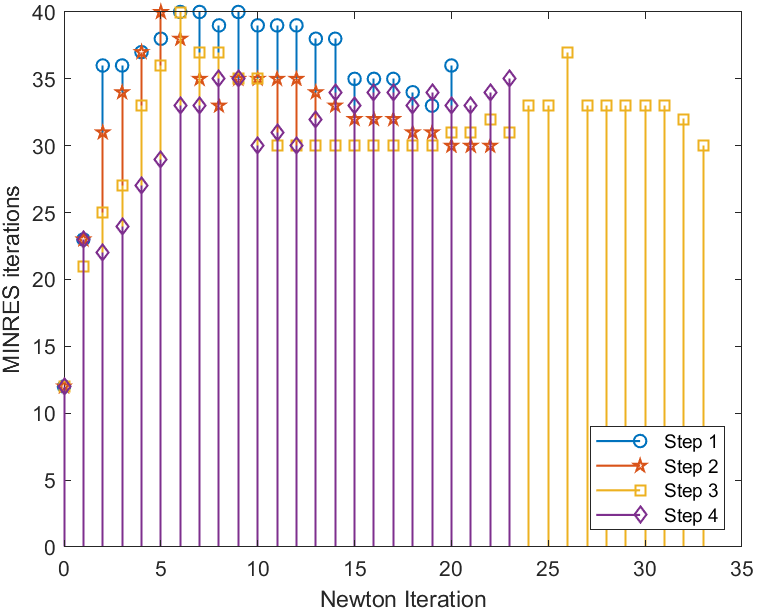}\\
\end{tabular}
\par\end{centering}
\caption{Number of MINRES iterations with the block preconditioners $\mathcal{B}_{h}(\Br)$(left)
and $\widetilde{\mathcal{B}}_{h}(\Br)$(right) at each Newton step
(Example \ref{exa:smooth}).\label{fig:MINRES_Step}}
\end{figure}

\begin{figure}
\begin{centering}
\includegraphics[scale=0.5]{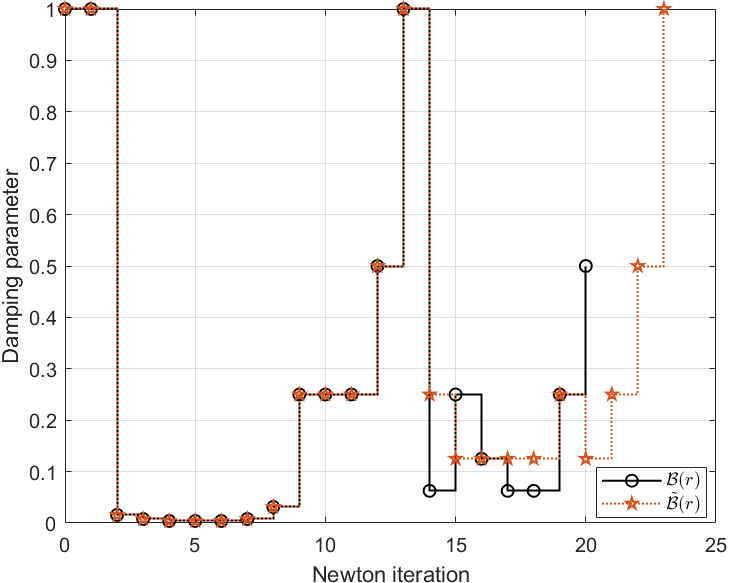}
\par\end{centering}
\caption{Damping parameter at each Newton iteration with $\mathcal{B}_{h}(\Br)$
and $\widetilde{\mathcal{B}}_{h}(\Br)$ on mesh $\mathcal{T}_{4}$.\label{fig:Damped}}
\end{figure}

As in the previous example, we vary the parameters $\alpha$ and $\beta$
to study the robustness of the preconditioners. Table \ref{tab:VaryExact2DNon}-\ref{tab:VaryInexact2DNon}
show the results for Newton method with the exact and inexact preconditioners on mesh
$\mathcal{T}_{3}$. Again, we see that the preconditioners show relative
robustness with respect to the parameters. The inexact preconditioner
requires a slightly higher number of iterations to converge compared
to the exact one, as we saw in the previous example.

\begin{table}
\begin{centering}
\caption{Iteration counts for Newton method with the block preconditioner $\mathcal{B}_{h}(\Br)$. (Example \ref{exa:nonsmooth})\label{tab:VaryExact2DNon}}
\begin{tabular}{|c|c|c|c|c|c|}
	\hline
	& \multicolumn{5}{c|}{$\alpha$}\\
	\hline
	\multirow{5}{*}{$\beta$} & $N_{{\rm Newton}}(N_{{\rm MINRES}})$ & 1e-1 & 5e-2 & 1e-2 & 5e-3\\
	\cline{2-6} \cline{3-6} \cline{4-6} \cline{5-6} \cline{6-6}
	& 1 & 22(28) & 8(35) & 5(46) & 4(49)\\
	\cline{2-6} \cline{3-6} \cline{4-6} \cline{5-6} \cline{6-6}
	& 1e-1 & 31(26) & 6(31) & 10(43) & 9(45)\\
	\cline{2-6} \cline{3-6} \cline{4-6} \cline{5-6} \cline{6-6}
	& 1e-2 & 27(21) & 7(28) & 9(40) & 12(41)\\
	\cline{2-6} \cline{3-6} \cline{4-6} \cline{5-6} \cline{6-6}
	& 1e-3 & 10(18) & 8(25) & 15(39) & 18(37)\\
	\hline
\end{tabular}
\par\end{centering}
\medskip{}

\centering{}\caption{Iteration counts for Newton method with the block preconditioner $\widetilde{\mathcal{B}}_{h}(\Br)$ (Example \ref{exa:nonsmooth}).\label{tab:VaryInexact2DNon}}
\begin{tabular}{|c|c|c|c|c|c|}
	\hline
	& \multicolumn{5}{c|}{$\alpha$}\\
	\hline
	\multirow{5}{*}{$\beta$} & $N_{{\rm Newton}}(N_{{\rm MINRES}})$ & 1e-1 & 5e-2 & 1e-2 & 5e-3\\
	\cline{2-6} \cline{3-6} \cline{4-6} \cline{5-6} \cline{6-6}
	& 1 & 22(33) & 8(42) & 5(46) & 4(49)\\
	\cline{2-6} \cline{3-6} \cline{4-6} \cline{5-6} \cline{6-6}
	& 1e-1 & 31(30) & 6(35) & 10(44) & 9(45)\\
	\cline{2-6} \cline{3-6} \cline{4-6} \cline{5-6} \cline{6-6}
	& 1e-2 & 37(25) & 7(30) & 9(42) & 12(41)\\
	\cline{2-6} \cline{3-6} \cline{4-6} \cline{5-6} \cline{6-6}
	& 1e-3 & 10(22) & 8(27) & 18(39) & 18(37)\\
	\hline
\end{tabular}
\end{table}

For comparison, we present the number of iterations
required by Picard method for different $\alpha$ and $\beta$ on
mesh $\mathcal{T}_{3}$ in Tables
\ref{tab:VaryExact2DNonPicard}-\ref{tab:VaryInexact2DNonPicard}. Compared with the results of Newton method,
we can see that the Newton iteration converges rapidly than
the Picard iteration for the considered parameters.
For the case of  $\alpha =1\rm{e}-2,\beta=1\rm{e}-3$, Figure \ref{fig:Res} plots the convergence histories of Picard method and Newton method with $\mathcal{B}_{h}(\Br)$ and $\widetilde{\mathcal{B}}_{h}(\Br)$. As we conclude, the Newton iteration behaves similarly to the Picard iteration in the early stages but converges rapidly in the end stages.

\begin{table}
\begin{centering}
	\caption{Iteration counts for Picard method with the block preconditioner $\mathcal{B}_{h}(\Br)$.
		(Example \ref{exa:nonsmooth})\label{tab:VaryExact2DNonPicard}}
	\par\end{centering}
\begin{centering}
	\begin{tabular}{|c|c|c|c|c|c|}
		\hline
		& \multicolumn{5}{c|}{$\alpha$}\tabularnewline
		\hline
		\multirow{5}{*}{$\beta$} & $N_{{\rm Picard}}(N_{{\rm MINRES}})$ & 1e-1 & 5e-2 & 1e-2 & 5e-3\\
		\cline{2-6} \cline{3-6} \cline{4-6} \cline{5-6} \cline{6-6}
		& 1 & 20(27) & 26(31) & 13(44) & 9(50)\tabularnewline
		\cline{2-6} \cline{3-6} \cline{4-6} \cline{5-6} \cline{6-6}
		& 1e-1 & 39(22) & 37(27) & 18(37) & 12(42)\tabularnewline
		\cline{2-6} \cline{3-6} \cline{4-6} \cline{5-6} \cline{6-6}
		& 1e-2 & 68(19) & 52(24) & 23(31) & 16(35)\tabularnewline
		\cline{2-6} \cline{3-6} \cline{4-6} \cline{5-6} \cline{6-6}
		& 1e-3 & 90(18) & 71(22) & 29(29) & 20(32)\tabularnewline
		\hline
	\end{tabular}
	\par\end{centering}
\medskip{}

\begin{centering}
	\caption{Iteration counts for Picard method with the block preconditioner $\widetilde{\mathcal{B}}_{h}(\Br)$
		(Example \ref{exa:nonsmooth}).\label{tab:VaryInexact2DNonPicard}}
	\par\end{centering}
\begin{centering}
	\begin{tabular}{|c|c|c|c|c|c|}
		\hline
		& \multicolumn{5}{c|}{$\alpha$}\tabularnewline
		\hline
		\multirow{5}{*}{$\beta$} & $N_{{\rm Picard}}(N_{{\rm MINRES}})$ & 1e-1 & 5e-2 & 1e-2 & 5e-3\\
		\cline{2-6} \cline{3-6} \cline{4-6} \cline{5-6} \cline{6-6}
		& 1 & 20(33) & 26(39) & 13(45) & 9(50)\tabularnewline
		\cline{2-6} \cline{3-6} \cline{4-6} \cline{5-6} \cline{6-6}
		& 1e-1 & 39(28) & 37(34) & 18(42) & 12(45)\tabularnewline
		\cline{2-6} \cline{3-6} \cline{4-6} \cline{5-6} \cline{6-6}
		& 1e-2 & 68(24) & 52(31) & 23(38) & 16(42)\tabularnewline
		\cline{2-6} \cline{3-6} \cline{4-6} \cline{5-6} \cline{6-6}
		& 1e-3 & 90(22) & 71(29) & 29(36) & 20(39)\tabularnewline
		\hline
	\end{tabular}
	\par\end{centering}
\centering{}%

\end{table}

\begin{figure}
	\begin{centering}
		\includegraphics[scale=0.4]{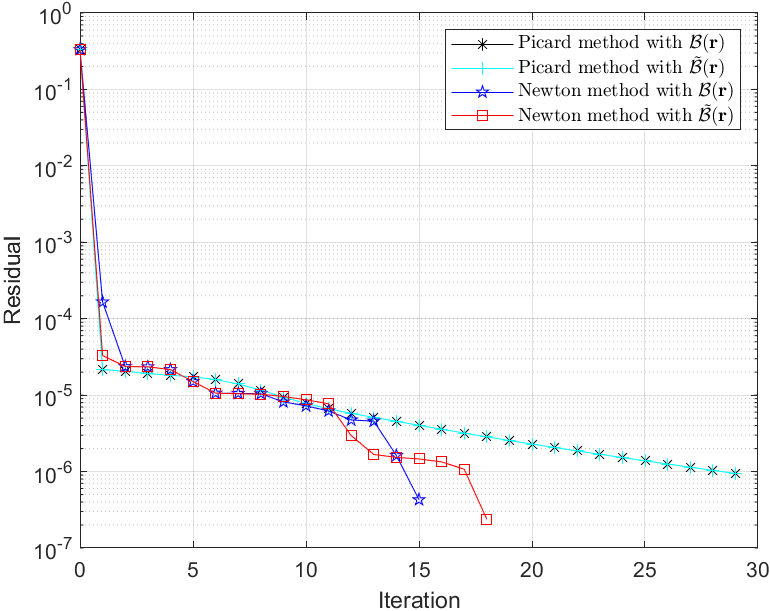}
		\par\end{centering}
	\caption{Convergence histories of Picard method and Newton method (Example \ref{exa:nonsmooth}).\label{fig:Res}}
\end{figure}

\begin{example}
	\label{exa:bench}This example we consider is a benchmark problem
	as seen in \cite{Bartels2012,Bartels2018,Tian2018}. Let $\Omega=(0,1)^{2}$
	and denote its center by $x_{\Omega}$. Given a triangulation $\mathcal{T}_{h}$
	of $\Omega$, we define a randomly perturbed $\xi_{h}\in U_{h}$,
	whose coefficient vector is sampled from the normally distributed.
	Write $B_{r}^{p}(x_{\Omega})=\left\{ x\in\mathbb{R}^{d}:|x-x_{\Omega}|_{{l}_p}<r\right\} $
	with $r=1/3$ and $p\in[1,\infty]$, the function $f$ is set by a
	characteristic function $f_{0}\coloneqq\chi_{B_{r}^{p}(x_{\Omega})}$
	that is mesh-dependent perturbed $\xi_{h}$, precisely,
	\[
	f=f_{0}+\delta\xi_{h},\quad\text{with}\quad\delta=0.1.
	\]
	The parameters are given by $\alpha=5{\rm e-}2$ and $\beta=1{\rm e-3}.$
	The initial value for Picard method is set as Example \ref{exa:nonsmooth}. While for  Newton method, the initial value is taken as 5-step  iterations of Picard method to improve its efficiency.
\end{example}

To experimentally study the effectiveness of the proposed method,
we run Picard method and Newton with $p=1,2,\infty$ on mesh ${\cal T}_{4}$.
Figure \ref{fig:noisy} displays the initial data and the outputs
of the iterative schemes. Notice that Newton method and Picard method
yield very similar results. From the outputs, we see that noise is removed effectively. The boundary is slightly smoothed, especially the corner and the numerical results indeed show the inherited properties of the ROF model.

\begin{figure}
		\begin{centering}
			\begin{tabular}{ccc}
				$p=1$ & $p=2$ & $p=\infty$\tabularnewline
				\includegraphics[scale=0.25]{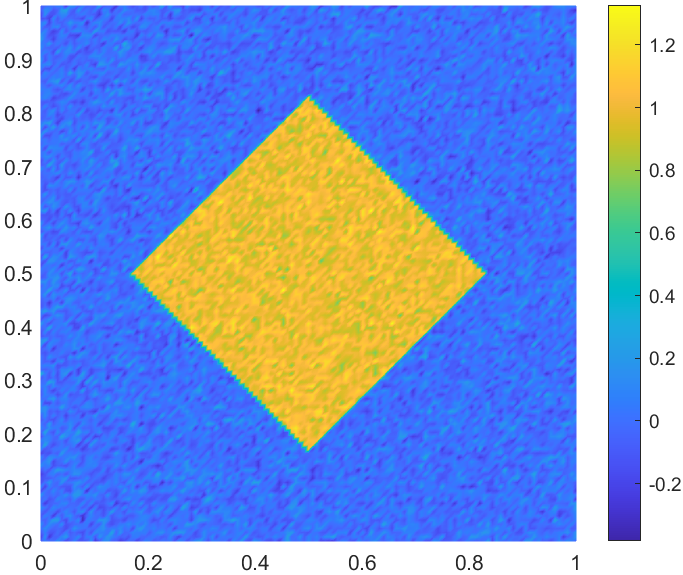} & \includegraphics[scale=0.25]{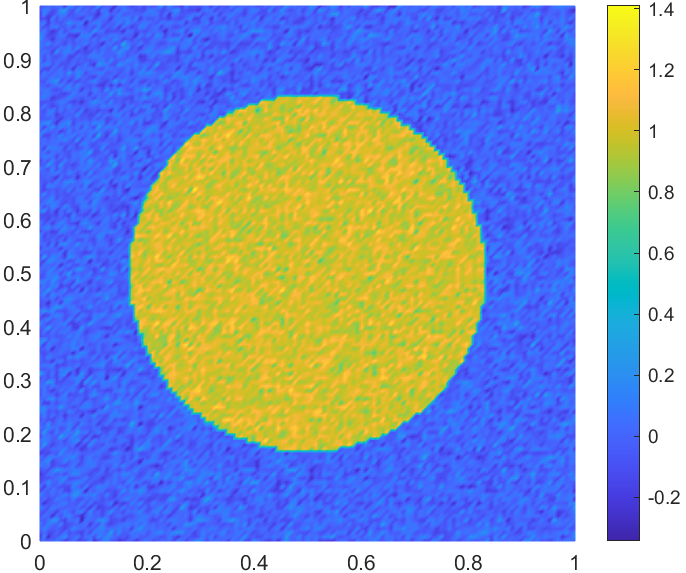} & \includegraphics[scale=0.25]{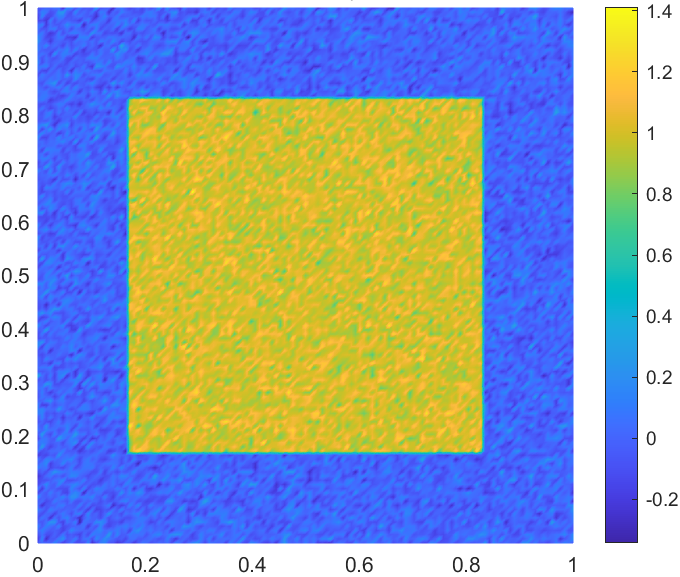}\tabularnewline
				\includegraphics[scale=0.25]{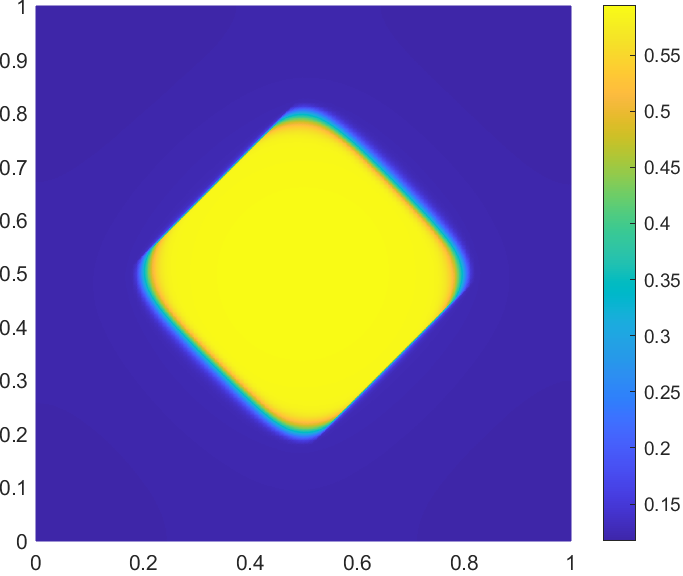} & \includegraphics[scale=0.25]{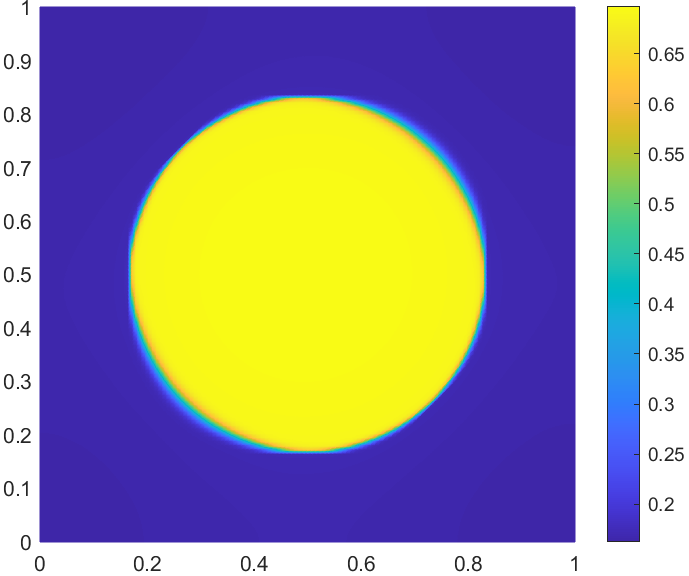} & \includegraphics[scale=0.25]{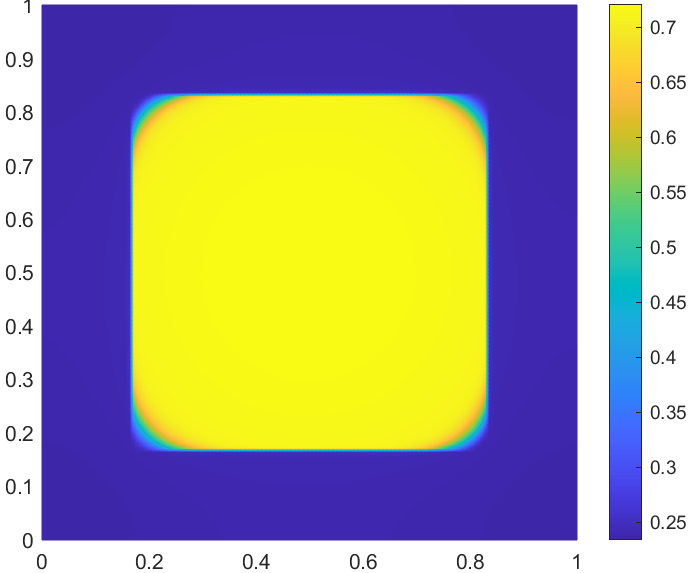}\tabularnewline
				\includegraphics[scale=0.25]{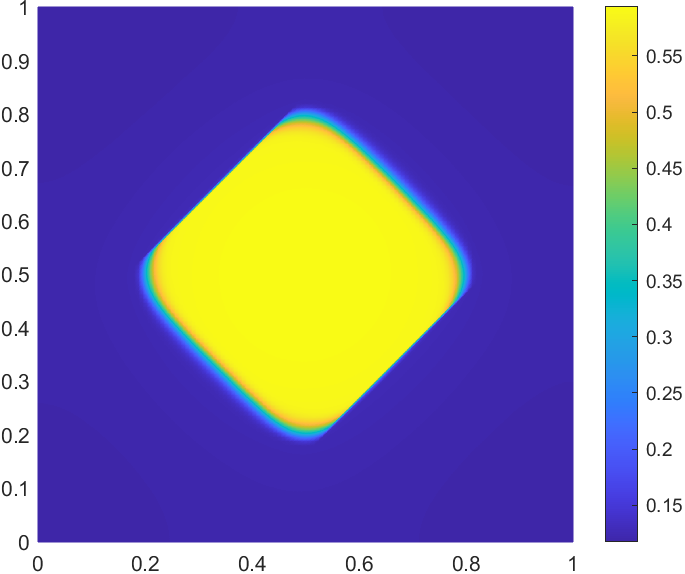} & \includegraphics[scale=0.25]{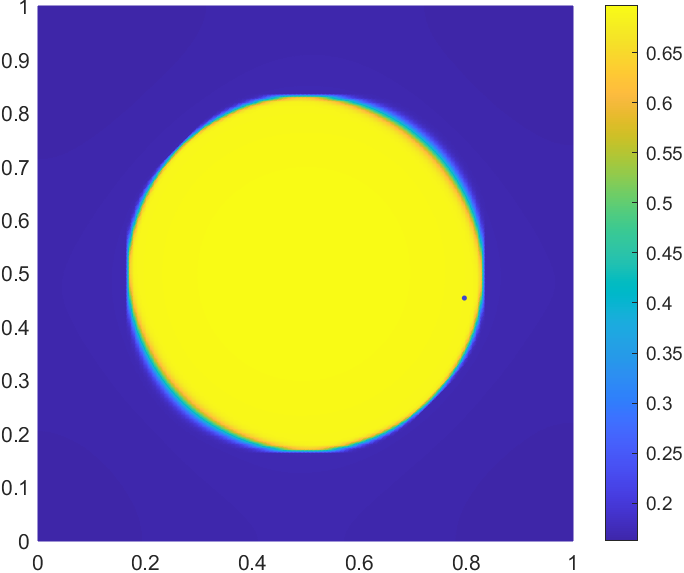} & \includegraphics[scale=0.25]{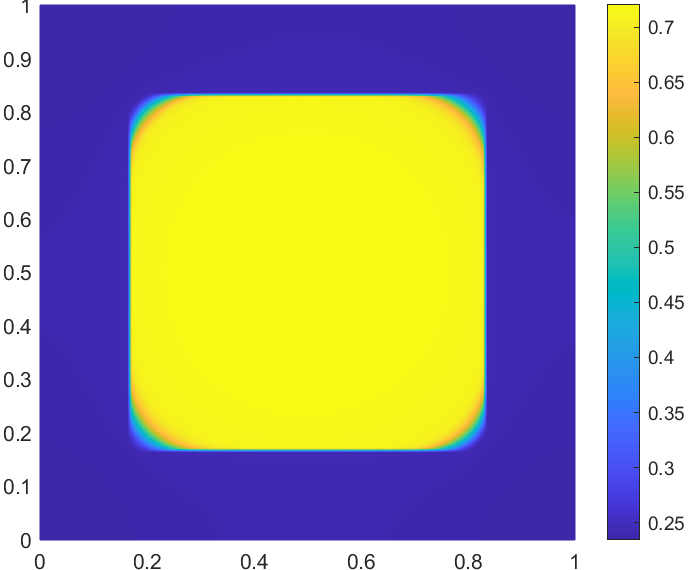}\tabularnewline
			\end{tabular}
			\par\end{centering}
		\caption{Noisy image, denoised image with Picard method and denoised image
			with Newton method (from top to bottom).
			\label{fig:noisy}}
\end{figure}

In Table \ref{tab:NonsmoothNoisy}, we display the iteration numbers
for Picard method and Newton method with $\widetilde{{\cal B}_{h}}(\Br)$.
We again observe that Newton method needs less iteration numbers than
Picard method. We further plot the convergence histories of this experiment
in Figure \ref{fig:Newton}. We see that  Newton method converges much faster than Picard method after a few damped Newton updatings.

\begin{table}
\centering{}\caption{Iteration counts for Picard method and Newton method (Example \ref{exa:bench}).\label{tab:NonsmoothNoisy}}
\begin{tabular}{|c|c|c|}
\hline
\multirow{1}{*}{$p$} & $N_{{\rm Picard}}(N_{{\rm MINRES}})$ & $N_{{\rm Newton}}(N_{{\rm MINRES}})$\tabularnewline
\hline
1 & 66(29) & 16(41)\tabularnewline
\hline
2 & 67(30) & 19(39)\tabularnewline
\hline
$\infty$ & 86(29) & 18(43)\tabularnewline
\hline
\end{tabular}
\end{table}

\begin{figure}
\begin{centering}
		\begin{tabular}{ccc}
			\includegraphics[scale=0.25]{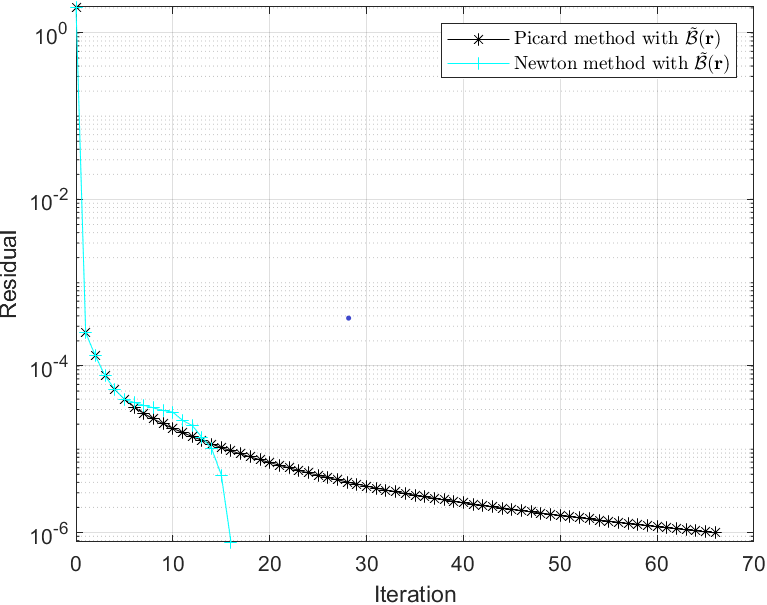} & \includegraphics[scale=0.25]{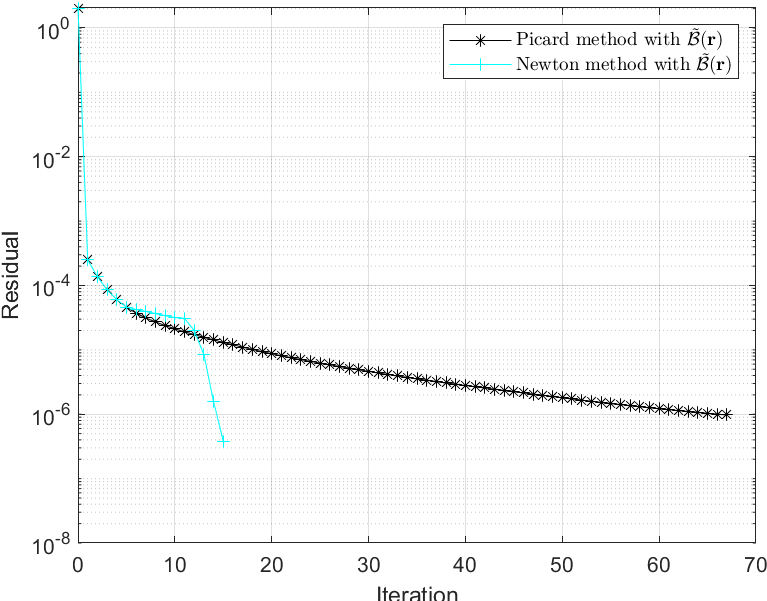} & \includegraphics[scale=0.25]{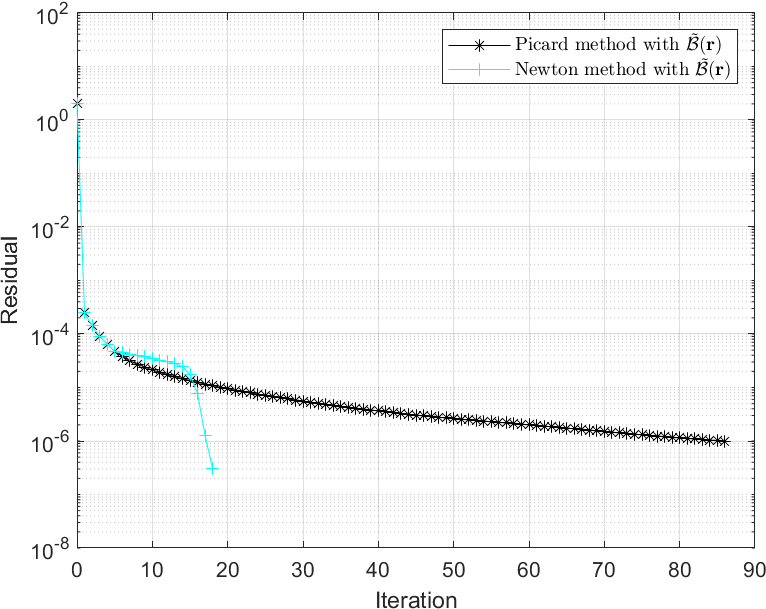}\tabularnewline
		\end{tabular}
\par\end{centering}
	\caption{Convergence histories of Picard method and Newton method for $p=1,2,\infty$
		(from left to right) (Example \ref{exa:bench}).\label{fig:Newton}}
\end{figure}

\section{Conclusions\label{sec:conlu}}

In this paper, we propose a preconditioned Newton solver for primal-dual
finite element approximation of total variation minimization and minimum surface problems. We develop some block diagonal  preconditioners
for the discrete problems at each Newton iteration, which are robust
with respect to the mesh size, the penalization parameter, the regularization
parameter, and the iterative step. We further prove that the resulting
preconditioned MINRES converges uniformly. The theoretical findings
are demonstrated by numerical experiments.

\bibliographystyle{siamplain}
\bibliography{ref}

\begin{thebibliography}{10}

\bibitem{Acar1994}
{\sc R.~Acar and C.~R. Vogel}, {\em Analysis of bounded variation penalty
  methods for ill-posed problems}, Inverse Problems, 10 (1994), pp.~1217--1229.

\bibitem{Bartels2012}
{\sc S.~Bartels}, {\em Total variation minimization with finite elements:
  convergence and iterative solution}, SIAM J. Numer. Anal., 50 (2012),
  pp.~1162--1180.

\bibitem{Bartels2015a}
{\sc S.~Bartels}, {\em Numerical methods for nonlinear partial differential
  equations}, vol.~47 of Springer Series in Computational Mathematics,
  Springer, Cham, 2015.

\bibitem{Bartels2018}
{\sc S.~Bartels, L.~Diening, and R.~H. Nochetto}, {\em Unconditional stability
  of semi-implicit discretizations of singular flows}, SIAM J. Numer. Anal., 56
  (2018), pp.~1896--1914.

\bibitem{Bartels2015}
{\sc S.~Bartels, R.~H. Nochetto, and A.~J. Salgado}, {\em A total variation
  diminishing interpolation operator and applications}, Math. Comp., 84 (2015),
  pp.~2569--2587.

\bibitem{Briggs2000}
{\sc W.~L. Briggs, V.~E. Henson, and S.~F. McCormick}, {\em A multigrid
  tutorial}, Society for Industrial and Applied Mathematics (SIAM),
  Philadelphia, PA, second~ed., 2000.

\bibitem{Casas1999}
{\sc E.~Casas, K.~Kunisch, and C.~Pola}, {\em Regularization by functions of
  bounded variation and applications to image enhancement}, Appl. Math. Optim.,
  40 (1999), pp.~229--257.

\bibitem{Chambolle2004}
{\sc A.~Chambolle}, {\em An algorithm for total variation minimization and
  applications}, J. Math. Imaging Vision, 20 (2004), pp.~89--97.
\newblock Special issue on mathematics and image analysis.

\bibitem{Chambolle2011}
{\sc A.~Chambolle, S.~E. Levine, and B.~J. Lucier}, {\em An upwind
  finite-difference method for total variation-based image smoothing}, SIAM J.
  Imaging Sci., 4 (2011), pp.~277--299.

\bibitem{Chambolle2020}
{\sc A.~Chambolle and T.~Pock}, {\em Crouzeix-{R}aviart approximation of the
  total variation on simplicial meshes}, J. Math. Imaging Vision, 62 (2020),
  pp.~872--899.

\bibitem{Chambolle2021}
{\sc A.~Chambolle and T.~Pock}, {\em Approximating the total variation with
  finite differences or finite elements}, in Geometric partial differential
  equations. {P}art {II}, vol.~22 of Handb. Numer. Anal.,
  Elsevier/North-Holland, Amsterdam, [2021] \copyright 2021, pp.~383--417.

\bibitem{Chan1999}
{\sc T.~F. Chan, G.~H. Golub, and P.~Mulet}, {\em A nonlinear primal-dual
  method for total variation-based image restoration}, SIAM J. Sci. Comput., 20
  (1999), pp.~1964--1977.

\bibitem{Chan2003}
{\sc T.~F. Chan and J.~Shen}, {\em On the role of the {BV} image model in image
  restoration}, in Recent advances in scientific computing and partial
  differential equations ({H}ong {K}ong, 2002), vol.~330 of Contemp. Math.,
  Amer. Math. Soc., Providence, RI, 2003, pp.~25--41.

\bibitem{Chen2008}
{\sc L.~Chen}, {\em {$i$FEM}: an integrated finite element methods package in
  {MATLAB}}, Technical Report, University of California at Irvine,  (2009).

\bibitem{Dobson1997}
{\sc D.~C. Dobson and C.~R. Vogel}, {\em Convergence of an iterative method for
  total variation denoising}, SIAM J. Numer. Anal., 34 (1997), pp.~1779--1791.

\bibitem{Goldstein2009}
{\sc T.~Goldstein and S.~Osher}, {\em The split {B}regman method for
  {$L1$}-regularized problems}, SIAM J. Imaging Sci., 2 (2009), pp.~323--343.

\bibitem{Hackbusch2016}
{\sc W.~Hackbusch}, {\em Iterative solution of large sparse systems of
  equations}, vol.~95 of Applied Mathematical Sciences, Springer, [Cham],
  second~ed., 2016.

\bibitem{Winther2009}
{\sc Q.~Hu, X.~Tai, and R.~Winther}, {\em {A saddle point approach to the
  computation of harmonic maps}}, Siam J. Numer. Anal, 47 (2009),
  pp.~1500--1523.

\bibitem{Lai2012}
{\sc M.-J. Lai and L.~Matamba~Messi}, {\em Piecewise linear approximation of
  the continuous {R}udin-{O}sher-{F}atemi model for image denoising}, SIAM J.
  Numer. Anal., 50 (2012), pp.~2446--2466.

\bibitem{Lao2021}
{\sc D.~Lao and S.~Zhao}, {\em Fundamental theories and their applications of
  the calculus of variations}, Springer, Singapore; Beijing Institute of
  Technology Press, Beijing, 2021.

\bibitem{Lee2019}
{\sc C.-O. Lee, E.-H. Park, and J.~Park}, {\em A finite element approach for
  the dual {R}udin-{O}sher-{F}atemi model and its nonoverlapping domain
  decomposition methods}, SIAM J. Sci. Comput., 41 (2019), pp.~B205--B228.

\bibitem{Daniel2012}
{\sc D.~Liberzon}, {\em Calculus of variations and optimal control theory},
  Princeton University Press, Princeton, NJ, 2012.
\newblock A concise introduction.

\bibitem{Mardal2011}
{\sc K.-A. Mardal and R.~Winther}, {\em Preconditioning discretizations of
  systems of partial differential equations}, Numer. Linear Algebra Appl., 18
  (2011), pp.~1--40.

\bibitem{Marquina2000}
{\sc A.~Marquina and S.~Osher}, {\em Explicit algorithms for a new time
  dependent model based on level set motion for nonlinear deblurring and noise
  removal}, SIAM J. Sci. Comput., 22 (2000), pp.~387--405.

\bibitem{Osher2003}
{\sc S.~Osher and R.~Fedkiw}, {\em Level set methods and dynamic implicit
  surfaces}, vol.~153 of Applied Mathematical Sciences, Springer-Verlag, New
  York, 2003.

\bibitem{Rudin1992}
{\sc L.~I. Rudin, S.~Osher, and E.~Fatemi}, {\em Nonlinear total variation
  based noise removal algorithms}, Phys. D, 60 (1992), pp.~259--268.
\newblock Experimental mathematics: computational issues in nonlinear science
  (Los Alamos, NM, 1991).

\bibitem{Strong2003}
{\sc D.~Strong and T.~Chan}, {\em Edge-preserving and scale-dependent
  properties of total variation regularization}, Inverse Problems, 19 (2003),
  pp.~S165--S187.
\newblock Special section on imaging.

\bibitem{Tai2009}
{\sc X.-C. Tai and C.~Wu}, {\em Augmented lagrangian method, dual methods and
  split bregman iteration for rof model}, in Scale Space and Variational
  Methods in Computer Vision, X.-C. Tai, K.~M{\o}rken, M.~Lysaker, and K.-A.
  Lie, eds., Berlin, Heidelberg, 2009, Springer Berlin Heidelberg,
  pp.~502--513.

\bibitem{Tian2018}
{\sc W.~Tian and X.~Yuan}, {\em Convergence analysis of primal-dual based
  methods for total variation minimization with finite element approximation},
  J. Sci. Comput., 76 (2018), pp.~243--274.

\bibitem{Vogel1996}
{\sc C.~R. Vogel and M.~E. Oman}, {\em Iterative methods for total variation
  denoising}, SIAM J. Sci. Comput., 17 (1996), pp.~227--238.
\newblock Special issue on iterative methods in numerical linear algebra
  (Breckenridge, CO, 1994).

\bibitem{Wang2011}
{\sc J.~Wang and B.~J. Lucier}, {\em Error bounds for finite-difference methods
  for {R}udin-{O}sher-{F}atemi image smoothing}, SIAM J. Numer. Anal., 49
  (2011), pp.~845--868.

\bibitem{Wu2010}
{\sc C.~Wu and X.-C. Tai}, {\em Augmented {L}agrangian method, dual methods,
  and split {B}regman iteration for {ROF}, vectorial {TV}, and high order
  models}, SIAM J. Imaging Sci., 3 (2010), pp.~300--339.

\bibitem{Xu2010}
{\sc J.~Xu, X.-C. Tai, and L.-L. Wang}, {\em A two-level domain decomposition
  method for image restoration}, Inverse Probl. Imaging, 4 (2010),
  pp.~523--545.

\bibitem{Xu2017}
{\sc J.~Xu and L.~Zikatanov}, {\em Algebraic multigrid methods}, Acta Numer.,
  26 (2017), pp.~591--721.

\bibitem{Yao2008}
{\sc C.~H. Yao}, {\em Finite element approximation for {TV} regularization},
  Int. J. Numer. Anal. Model., 5 (2008), pp.~516--526.

\end{thebibliography}

\end{document}